\documentclass[11pt]{article}
       \usepackage{amsfonts}
       \usepackage{stmaryrd}
       \usepackage{latexsym,amssymb,mathrsfs,fancyhdr}
       \font\tenmsb=msbm10
       \font\sevenmsb=msbm7
       \font\fivemsb=msbm5
       \catcode`\@=11
       \ifx\amstexloaded@\relax\catcode`\@=\active
       \endinput\else\let\amstexloaded@\relax\fi
       \def\spaces@{\space\space\space\space\space}
       \def\spaces@@{\spaces@\spaces@\spaces@\spaces@\spaces@}
       \def\space@.  {\futurelet\space@\relax}
       \space@.   %
       \def\Err@#1{\errhelp\defaulthelp@\errmessage{AmS-TeX error: #1}}
       \def\relaxnext@{\let\next\relax}
       \def\accentfam@{7}
       \def\noaccents@{\def\accentfam@{0}}
       \def\Cal{\relaxnext@\ifmmode\let\next\Cal@\else
       \def\next{\Err@{Use \string\Cal\space only in math mode}}\fi\next}
       \def\Cal@#1{{\Cal@@{#1}}}
       \def\Cal@@#1{\noaccents@\fam\tw@#1}
       \def\Bbb{\relaxnext@\ifmmode\let\next\Bbb@\else
       \def\next{\Err@{Use \string\Bbb\space only in math mode}}\fi\next}
       \def\Bbb@#1{{\Bbb@@{#1}}}
       \def\Bbb@@#1{\noaccents@\fam\msbfam#1}
       \newfam\msbfam
       \textfont\msbfam=\tenmsb
       \scriptfont\msbfam=\sevenmsb
       \scriptscriptfont\msbfam=\fivemsb


\def\Z{\mathbb{Z}}
\def\R{\mathbb{R}}
\def\T{\mathbb{T}}
\def\C{\mathbb{C}}


\@addtoreset{equation}{section}

\newcommand{\la}{\langle }
\newcommand{\ra}{\rangle }

\newcommand{\beq}{\begin{equation} }
\newcommand{\eeq}{\end{equation} }

\usepackage[german,french,english]{babel}
\usepackage{amsmath,amssymb}

\newtheorem{theo}{Theorem}[section]
\newtheorem{rem}{Remark}[section]

\newtheorem{prop}{Proposition}[section]
\newtheorem{lemma}{Lemma}[section]
\newtheorem{iteration lemma}{iteration Lemma}[section]


\textheight220mm \textwidth150mm \hoffset-1.2cm \voffset-1cm


\begin{document}



\setlength{\columnsep}{5pt}
\title{General KAM theorems and their applications to invariant tori with prescribed frequencies  \footnote{The work was supported by the National Natural
Science Foundation of China(11371090) }}
 \author{Junxiang Xu \footnote{Corresponding author,  E-mail address: xujun@seu.edu.cn  } , Xuezhu Lu\footnote{ E-mail: Jennysundate@163.com} \\ Department of
Mathematics,
 Southeast University \\  Nanjing 210096,  China
  }

\maketitle
\begin{quote}
{\small  In this paper we develop some new KAM-technique to prove two general  KAM theorems for nearly integrable hamiltonian systems without assuming any non-degeneracy condition.
Many of KAM-type results (including the classical KAM theorem) are   special cases of our theorems under some non-degeneracy condition and some smoothness condition.
   Moreover,  we can  obtain some interesting results about KAM tori with  prescribed frequencies.
   }

Key Words: {\small  hamiltonian system; KAM iteration; invariant
tori; non-degeneracy condition.}
  \end{quote}

\section{Introduction}\label{a}

 In this paper we   consider  the persistence of invariant tori of integrable hamiltonian system under small perturbation, which
    has been the fundamental problem of  hamiltonian system and also the  motivation of many KAM theorems \cite{A1,E1,K, Mel1, Mel2, Mos, Ms}.  As is well known,   KAM method becomes a mighty instrument to deal with  such that quasi-periodic problem with the notorious small divisors. The proof of KAM theorems are based on  the  KAM iteration, involved with certain small divisor condition  or non-degeneracy condition \cite{B, Bru, Ch,Pos1, P1, R1,R2}.
   In this paper, we  develop some new KAM technique to prove two general  KAM theorems without imposing  small divisor condition  or non-degeneracy condition, which   can be  applied to  diverse cases to obtain some interesting results.
These general  KAM theorems  make no sense if no   small divisor condition  or non-degeneracy condition is assumed.


Let  $ H(q,p)=h(p)+f(q,p)$,
where    $(q, p)\in \T^n\times D,$  with $\T^n$ the usual
$n$-dimensional torus and $D$ a bounded simply connected open domain
of $\R^n.$ $h(p)$ and $f(q,p)$ are real analytic on $\bar D$ and
$\bar D \times \T^n$, respectively. The corresponding hamiltonian
system is
\begin{align}
\dot q &=\;\;\, H_p(q,p)=h_p(p)
+ f_p(q,p)\nonumber \\ \dot p & =-H_q(q,p) = \;\;\;\;\;\;\;\;\; -f_q(q,p)
 \label{s1}
\end{align}

 At first, by a transformation $p=\xi+I$ and $q=\theta$, $\xi\in D,$ we introduce a parameter $\xi$ and then the hamiltonian system (\ref{s1}) is equivalent to
  a parameterized hamiltonian system:
  \begin{align}H(q,p)& =h(\xi)+\langle
h_p(\xi),I\rangle + f_h(\xi;I) + f(\theta, \xi+I)\nonumber \\
& =e+\langle\omega(\xi), I\rangle + P(\xi;\theta,I),\nonumber
\end{align} where
$$ e=h(\xi), \ \omega(\xi)=h_p(\xi), \  f_h(\xi;I)=h(I+\xi)-h(\xi)-\langle
h_p(\xi),I\rangle ,$$ $$P(\xi;\theta,
I)=f_h(\xi;I) + f(\theta, \xi+I),$$ and $\xi\in\Pi\subset  D$ is regarded
as parameter.
 Here $e$ is an
energy constant  and   usually is omitted in KAM steps. $\omega: \xi \to
\omega(\xi)$ is called frequency mapping, and
 $P$ is  a small perturbation term.

This technique of  introducing  parameter was first used  by P\"oschel in \cite{P1}, leading to separation of  invariant tori and their frequencies in KAM iteration.

Then the corresponding hamiltonian
system becomes
 \begin{equation}
 \left\{
 \begin {aligned}
\dot \theta &=\;\;\, H_I=\omega(\xi)  + P_{I}(\xi;\theta,I) \\
\dot I &=-H_\theta=\;\;\;\;\;\;\;\;  -P_{
\theta}(\xi;\theta,I)
\end{aligned}
\right. \label{s2}
\end{equation}
  Thus, the  persistence of a family of invariant tori $\T^n\times\{p\}$ for   (\ref{s1}) is reduced to that of invariant tori
  $\T^n\times\{0\}$ with  frequencies $\omega(\xi)$
  for  hamiltonian  system (\ref{s2}) with the parameter $\xi\in \Pi$.


Without loss of generality, we consider the  parameterized
hamiltonian system (\ref{s2}) with  $H=H(\xi;\theta, I) = \langle\omega(\xi), I \rangle + P(\xi;\theta, I),$ where $P$ is a perturbation term.

Let  $0<\alpha<1, \tau>n-1$ and
\begin{equation}\label{dio}
O_{\alpha,\tau}=\bigl\{\omega \in \R^n:  \
\bigl|\langle\omega, k \rangle \bigr|\ge
\frac{\alpha}{|k|^\tau}, \,  \forall \,  k\in \Z^n \setminus \{0\}\bigr\}.
\end{equation}

  If $P=0$,   for every parameter $\xi\in\Pi$ the system $(\ref{s2})$ admits an  invariant torus  $ T^n\times \{0\}$ with   frequency $\omega(\xi)$. The classical KAM theorem says that  if the frequency mapping is
non-degenerate in Kolmogorov's  sense:
\begin{equation} \mbox{det}( \partial_{\xi} \omega  )=\mbox{det}(h_{pp})\ne 0,
\nonumber\end{equation}
  then for most  $\xi\in \Pi$ such that $\omega(\xi)\in O_{\alpha,\tau}$, the  invariant tori with    frequencies $\omega(\xi)$  will survive of arbitrarily sufficiently small perturbations
  \cite{ A1,E1, K, Ms,P1,Pos1}.

 Later,  Kolmogorov's  non-degeneracy condition has  been weakened to    R\"ussmann's non-degeneracy condition \cite{R1,R2,X1, Se2}:
\begin{equation}
a_1\omega_1(\xi)+a_2\omega_2(\xi)+\cdots +a_n\omega_n(\xi)\not\equiv
0\,\,\, \mbox{on}\ \Pi ,\label{rn}
\end{equation} for all $
(a_1,   \ldots, a_n) \in \R^n\setminus\{0\} $.
That means, under the condition (\ref{rn}), for most  $\xi$ in the sense of Lebesgue  measure,    the perturbed system
$(\ref{s2})$ still has  invariant tori with  frequencies in $O_{\alpha, \tau}$.
However, since  the range of the frequency mapping $\omega$ may be on a sub-manifold,
  the frequencies of persisting invariant tori  may not come from  unperturbed ones.
Thus it is difficult to provide  accurate information about the frequencies of KAM tori.



More recently, some authors  turn to study the  persistence of invariant tori with prescribed frequency.
In the paper \cite{Xu4}, assuming $\omega_0\in O_{\alpha,\tau}$ and   $\mbox{deg}(\omega, \Pi, \omega_0)\ne 0$, the authors proved
the perturbed parameterized system (\ref{s2}) still has an invariant torus with
 $\omega_0$ as its frequency, i.e., the torus with the prescribed frequency  $\omega_0$ persists under small perturbations.

 However, the  result in \cite{Xu4}
  cannot be generalized to lower dimensional  elliptic  invariant tori.
In the  Kolmogorov  non-degenerate case, Bourgain considered the following  hamiltonian
 $$H(\omega;\theta, I, z, \bar z)=
\langle\omega, I\rangle +  \Omega(\omega)z\,\bar z+P(\omega;\theta, I, z,\bar z),$$
  and  obtained a similar result for lower dimensional  elliptic  invariant tori
 \cite{Bou2}.  More precisely,
 suppose $\omega_0\in O_\alpha$ and  $(\omega_0, \Omega_0)=(\omega_0, \Omega(\omega_0))$ satisfy the first Melnikov  condition, then  for most of sufficiently small $\lambda$, there exists $\xi$ such that the above perturbed hamiltonian
  has an elliptic lower dimensional invariant torus $\T^n\times\{0,0,0\}$ with the frequency $(1+\lambda)\omega_0.$


In this paper, we  are mainly interested in the persistence of invariant tori  with prescribed frequency.
  For this purpose, we will develop a new technique of  KAM iteration
    to separate non-degeneracy condition from KAM iteration. The key  lies in   an explicit  extension of small divisors to the  parameter definition domain.
       Our extension of small divisors   always  works even though the  small divisor condition does not hold for every $\xi\in\Pi$.   Thus the constructed symplectic transformation  and  the new perturbation are well defined for all parameters. However, only for these parameters such that the small divisor
    condition holds, the new hamiltonian is exactly from the original one  under the  symplectic transformation; otherwise,  we only  obtain a  formal new hamiltonian, it may have no relation with the previous hamiltonian  and thus cannot provide any useful information.

To be more precise, let $\alpha>0, \tau>n-1$ and a  family of parameterized hamiltonian be
$$\{H(\xi; \theta,I)=\langle\omega(\xi),I\rangle +P(\xi; \theta,I): \ \xi\in\Pi\}.$$
By our KAM iteration, we can have a  family of parameterized normal hamiltonian  $$\{H_*(\xi; \theta,I)=\langle\omega_*(\xi),I\rangle +P_*(\xi; \theta, I): \ \xi\in\Pi\},$$
   where the frequency mapping $\omega_*(\xi)$ is a small perturbation of $\omega$ and $P_*=O(I^2).$
For $\xi\in \Pi,$ if $\omega_*(\xi)\in O_\alpha$, the original  hamiltonian   $H(\xi;\cdot)$ is just normalized to   $H_*(\xi;\cdot)$ and then has an invariant torus with frequency  $\omega_*(\xi)$.
 If  $\omega_*(\xi)\notin O_\alpha,$  we cannot have  any   relation between  $H(\xi;\cdot)$ and  $H_*(\xi;\cdot)$; in this case  $H_*(\xi;\cdot)$ does not provided  any information of $H(\xi;\cdot).$
 Thus, if   $\omega_*(\xi)\notin O_\alpha$ for all $\xi\in\Pi$, our result makes no sense.

\section{ Main Results \label{b}}

To state our theorems, we first give some notations.
Define a small neighborhood of $\T^n\times\{0\}$ by
$$D(s,r)=\bigl\{(\theta,I)\in \C^n\times \C^n: \,  |\mbox{Im}\ \theta |_\infty \le s,
\, |I|_1 \le r\bigr. \bigr\},$$ where $|\mbox{Im}\ \theta |_\infty
=\max_{1\le i\le n}|\mbox{Im} \theta_i|,$ $|I|_1=\sum_{1\le i\le
n}|I_i|$.
Let $\Pi\subset \R^n$ be a bounded connected closed domain.

Consider a parameterized  hamiltonian
\begin{equation}\label{LH1}
H(\xi;\theta, I)=
\langle\omega(\xi), I\rangle + P(\xi;\theta, I).
\end{equation}
Suppose $H(\xi;\theta, I)$ is real analytic in
$(\theta, I)\in D(s,r)$ and $C^m$-smooth in $\xi\in \Pi$ with  $m\ge 0$.

We expand $P(\xi;\theta,I)$ as the Fourier series with respect to
$\theta$
$$P(\xi;\theta,I)=\sum_{k\in \Z^n  }P_{k }(\xi;I)\, e^{{\rm i}\langle  k,\theta
\rangle   }.$$
Let $\Z^n_+$  consist  of all the integer vectors with non-negative components, and then   $P_{k}(\xi;I)=\sum_{\ell\in \Z_+^n }P_{k,\ell}(\xi)\, I^\ell $.

Define
$$\|P\|_{\alpha,\Pi\times D(s,\,r)}= \sum_{k }
\|P_{k}\|_{\Pi;r} \, e^{s|k|} ,$$
 where
  $\|P_{k}\|_{\Pi;r}=\sup_{ |I|_1\le r } |\sum_{\ell\ge 0 }\|P_{k,\ell}\|_{\alpha, C^m(\Pi)}I^\ell | $
with the weighted norm
$$\|P_{k,\ell}\|_{\alpha, C^m(\Pi)}=\max\limits_{|\beta|\le m}\alpha^{|\beta|}\max\limits_{\xi\in\Pi}\bigl|\frac{\partial^\beta P_{k,\ell}(\xi)}{\partial^\beta\xi}\bigr|.$$
 The weight  $\alpha$ is supplemented so that the  relevant KAM estimates in the sequel can be written in a succinct way.

\vskip 0.2 true cm \begin{theo}\label{TT2}
Consider the hamiltonian   (\ref{LH1}) and suppose $H$
 is real analytic in $(\theta, I)$
on $ D(s,r)$ and $C^m$-smooth in  $\xi$ on $\Pi$. Let $O_{\alpha,\tau}$  be defined  as (\ref{dio}).
For any $0<\alpha\le 1,$ $\tau>n-1$ and $m\ge 0,$   there exists a sufficiently small  $\gamma>0,$
 such that
   if
   $$\|P\|_{\alpha, \Pi\times D(s,\,r)}=\epsilon\le \alpha r s^{\tau'} \gamma \ \mbox{with} \, \tau'=n+(m+1)\tau+m, $$
 then  there exist a  family of symplectic mappings $ \{\Phi_*(\xi;\cdot )\, | \ \xi\in\Pi\}$  and  a  family of hamiltonian  $ \{H_*(\xi;\cdot )\, | \ \xi\in\Pi\}$ such that   the following conclusions  hold:


  \vskip 0.2 true cm
  \noindent
  (i)
 $\Phi_*(\xi;\theta, I)$  is  analytic in $(\theta,I)$ on $ D(s/2,\,r/2)$ and $C^m$-smooth in $\xi$ on $\Pi$,
    and maps $D(s/2,\,r/2)$ into $D(s,\,r).$
      Moreover, $$\|W(\Phi_* -\mbox{id}) \ \|_{\alpha, \Pi\times D(s/2,\,r/2)}\le c\gamma,$$ where $W=\mbox{diag}(\rho^{-1}I_n, r^{-1}I_n)$ with $\rho=s/20$ and   $I_n$  the $n$-th  unit matrix.

   \vskip 0.2 true cm
  \noindent
  (ii)
   $$H_*(\xi;\theta, I)=\langle\omega_*(\xi), I\rangle +P_*(\xi;\theta, I) $$
is  analytic in $(\theta,I)$ on $ D(s/2,\,r/2)$ and  $C^m$-smooth in $\xi$ on $\Pi,$ with the estimates
    $$\|\omega_*-\omega\|_{\alpha, C^m(\Pi)}\le 2\epsilon/r, \ \ P_*(\xi;\theta, I)=O(I^2).$$

   \vskip 0.2 true cm
  \noindent
  (iii)  If  $\omega_*(\xi)\in O_{\alpha,\tau},$    we have
 $$H\circ\Phi_*(\xi;\theta,I)=H_*(\xi;\theta, I).$$
 Thus the hamiltonian   $H(\xi;\cdot) $ has an invariant torus  $\Phi_*(\xi;\T^n,0)$ with the frequency $\omega_*(\xi).$

\end{theo}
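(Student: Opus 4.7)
The plan is to drive a KAM iteration over the \emph{entire} parameter set $\Pi$, even at parameters where $\omega(\xi)$ is resonant. Starting from $H_0=H$, $\omega_0=\omega$, $P_0=P$, at the $j$-th step I would construct a symplectic map $\Phi_j$ generated by a hamiltonian $F_j$ designed to remove from $P_j$ the terms that are constant and linear in $I$ and non-constant in $\theta$; the $k=0$, linear-in-$I$ coefficient is absorbed into a frequency correction $\omega_{j+1}=\omega_j+\widehat P_j$. In Fourier modes the generating function requires inversion of $\mathrm{i}\langle k,\omega_j(\xi)\rangle$, which generically fails on some large subset of $\Pi$.

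The key novelty is to replace each small divisor $\langle k,\omega_j(\xi)\rangle$ by an explicit $C^m$ extension $d_{j,k}(\xi)$ defined on all of $\Pi$, with three properties: $d_{j,k}(\xi)=\langle k,\omega_j(\xi)\rangle$ wherever $|\langle k,\omega_j(\xi)\rangle|\ge \alpha/|k|^\tau$; a uniform lower bound $|d_{j,k}(\xi)|\ge \alpha/(2|k|^\tau)$ everywhere else; and $C^m$ derivative bounds of order $|k|^{m\tau+m}/\alpha^m$. One can take $d_{j,k}$ to be a bump modification of $\langle k,\omega_j\rangle$ through a smooth cutoff depending on $|\langle k,\omega_j\rangle|\cdot|k|^\tau/\alpha$. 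Using $d_{j,k}$ in place of the genuine divisor defines $F_j$, and hence $\Phi_j$, globally on $\Pi$, and produces a purely \emph{formal} new hamiltonian $H_{j+1}=\langle\omega_{j+1},I\rangle+P_{j+1}$. At parameters where $\omega_j(\xi)\in O_{\alpha,\tau}$, however, $d_{j,k}(\xi)$ equals the true divisor for every $k\ne 0$, and then $H_{j+1}=H_j\circ\Phi_j$ as a literal identity. The weight $\alpha$ in the norm $\|\cdot\|_{\alpha,\Pi\times D(s,r)}$ is chosen precisely to absorb the $C^m$-derivative loss cleanly.

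On shrinking domains $D(s_j,r_j)$, standard analytic KAM estimates deliver the quadratic bound $\|P_{j+1}\|\le C\,\|P_j\|^{2}/(\alpha r_j s_j^{\tau'})$ with $\tau'=n+(m+1)\tau+m$, the extra powers of $s_j$ arising from the summation of $|k|^{n+(m+1)\tau+m}e^{-s_j|k|}$ over $k$. For $\gamma$ small enough, $\epsilon_j\to 0$ superexponentially, and the composition $\Phi_*=\lim\Phi_0\circ\cdots\circ\Phi_j$ together with the limit frequency $\omega_*=\lim\omega_j$ exist in the stated norms, yielding (i) and (ii). For (iii), fix any $\xi$ with $\omega_*(\xi)\in O_{\alpha,\tau}$. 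Because $\|\omega_{j+1}-\omega_j\|_{\alpha,C^m(\Pi)}\le 2\epsilon_j/r_j$ is summable, a geometric argument shows $\omega_j(\xi)\in O_{\alpha/2,\tau}$ at every step, so at this particular $\xi$ every $d_{j,k}$ equals the genuine divisor and each formal step is a genuine symplectic conjugation; passing to the limit yields $H\circ\Phi_*(\xi;\cdot)=H_*(\xi;\cdot)=\langle\omega_*(\xi),I\rangle+O(I^2)$, so $\Phi_*(\xi;\T^n,0)$ is an invariant torus with frequency $\omega_*(\xi)$.

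The principal obstacle I expect is the construction and joint control of the extensions $d_{j,k}$: they must simultaneously stay bounded below by $\alpha/(2|k|^\tau)$, have $C^m$ derivatives whose product with $e^{-s|k|}$ is summable, and agree \emph{exactly} with the genuine divisors on the Diophantine set so that the formal iteration becomes literal in the limit whenever $\omega_*(\xi)\in O_{\alpha,\tau}$. Balancing these three requirements against the shrinkage of the analytic strip at each step is what forces the exponent $\tau'=n+(m+1)\tau+m$ in the smallness hypothesis.
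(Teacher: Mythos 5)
Your overall architecture (formal iteration over all of $\Pi$ via a $C^m$ extension of the small divisors, exactness only at parameters where the divisors are genuine, the weighted norm absorbing the $\alpha^{-1}$ losses) is the paper's strategy, and parts (i)--(ii) would go through. The genuine gap is in your argument for (iii). You claim that $\omega_*(\xi)\in O_{\alpha,\tau}$ plus summability of $\|\omega_{j+1}-\omega_j\|\le 2\epsilon_j/r_j$ yields $\omega_j(\xi)\in O_{\alpha/2,\tau}$ for every $j$, and hence that every $d_{j,k}(\xi)$ coincides with the true divisor. This fails twice. Quantitatively, from $|\langle k,\omega_*(\xi)\rangle|\ge \alpha/|k|^\tau$ and $\|\omega_j-\omega_*\|\le 2\epsilon_j/r_j$ you only get $|\langle k,\omega_j(\xi)\rangle|\ge \alpha/|k|^\tau-2|k|\epsilon_j/r_j$, which is useful only for $|k|^{\tau+1}\le c\,\alpha r_j/\epsilon_j$; for larger $|k|$ the frequency drift can sweep $\langle k,\omega_j(\xi)\rangle$ across the whole resonance zone, so no full Diophantine condition on the intermediate $\omega_j$ can be deduced from one on the limit $\omega_*$ (the Diophantine set has empty interior). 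Since your step-$j$ homological equation is solved for \emph{all} Fourier modes, you cannot conclude that step $j$ is an exact conjugation at such $\xi$. Moreover, even in the range of $k$ where the bound is useful, $\omega_j(\xi)\in O_{\alpha/2,\tau}$ only places you in the interpolation region of your own cutoff: you stipulated $d_{j,k}=\langle k,\omega_j\rangle$ only where $|\langle k,\omega_j\rangle|\ge\alpha/|k|^\tau$, so a lower bound at level $\alpha/2$ does not give equality.

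The paper closes precisely this gap with two devices absent from your proposal. First, the step-$j$ homological equation is posed only for the truncation $R^{K_j}$ with $e^{-K_j\rho_j}=E_j$ (the tail goes into the new error), so exactness at step $j$ requires the divisor condition only for $0<|k|\le K_j$. Second, the cutoff thresholds are \emph{increasing}, $\alpha_j=(1-2^{-(j+2)})\alpha\uparrow\alpha$, with the parameters tuned so that $2K_j^{\tau+1}\epsilon_j\le(\alpha_{j+1}-\alpha_j)r_j$. Then for $\xi$ with $\omega_*(\xi)\in O_{\alpha,\tau}$ and $0<|k|\le K_j$ one has $|\langle k,\omega_j(\xi)\rangle|\ge \alpha/|k|^\tau-2K_j\epsilon_j/r_j\ge \alpha_j/|k|^\tau$, i.e. $\xi$ lies in every step's good set $\Pi_j$ where the step-$j$ cutoff is identically $1$ on the modes actually used; this is the nesting $\Pi_*\subset\bigcap_j\Pi_j$ that makes each formal step a literal conjugation and lets the identity $H\circ\Phi_*=H_*$ pass to the limit. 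To repair your proof you would need to add both the truncation at $K_j$ and a strictly graded family of thresholds (or an equivalent mechanism); with a single fixed threshold $\alpha$ and untruncated homological equations, conclusion (iii) does not follow from the stated estimates.
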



Next we consider the perturbation of  elliptic  lower dimensional invariant tori  and establish an analogous KAM theorem.
When it causes no confusion, we still employ the same notations to denote the variables and sequencies.

\vskip 4 pt
Define a complex neighborhood of $\T^n\times\{0,0,0\}$ by
$$D(s,r)=\{(\theta, I, z, \bar z)\in \C^n\times \C^n\times \C^{\bar n}\times \C^{\bar n}: \ |\mbox{Im}\theta |_{\infty}\le s, |r|_{1}\le r^2,  |z|_2\le r, |\bar z|_2\le r\}.$$

Consider a parameterized  hamiltonian
\begin{equation}\label{WH1}
  H(\xi;\theta, I, z, \bar z)=
\langle\omega(\xi), I\rangle +  \langle\Omega(\xi), z\, \bar z\rangle+P(\xi;\theta, I, z,\bar z)
\end{equation}
defined for $(\xi;\theta, I, z, \bar z)\in \Pi\times D(s,r)$, where
\begin{equation}\label{WH2}
\Omega=(\Omega_1, \ldots, \Omega_{\bar n}) \quad  \mbox{and}~~ z\bar z=(z_1\bar z_1,   \ldots, z_{\bar n}\bar z_{\bar n}).
\end{equation}
The associated  symplectic structure is
$$\sum_{i=1}^{n}d I_i \wedge d\theta_i +{\rm i}\sum_{j=1}^{\bar n}dz_j\wedge d\bar z_j,$$ with ${\rm i}=\sqrt{-1}.$
Suppose $H$ is real analytic in
$(\theta, I, z, \bar z)$ on  $ D(s,r)$ and $C^m$-smooth in $\xi$ on $\Pi$ with  $m\ge 0$.

Expand $P$ as the Fourier series with respect to
$\theta$
$$P(\xi;\theta,I,z,\bar z)=\sum_{k\in \Z^n  }P_{k}(\xi; I, z, \bar z)\, e^{{\rm i}\langle  k,\theta
\rangle   } .$$
Let
 $$P_{k}(\xi;I, z,\bar z)=\sum_{\ell_1\in \Z_+^n, \ell_2, \ell_3\in Z_+^{\bar n}}P_{k,\ell}(\xi)\, I^{\ell_1}z^{\ell_2}{\bar z}^{\ell_3},\ \ \ell=(\ell_1, \ell_2, \ell_3)  $$
  Then we define
$$\|P\|_{\alpha, \Pi\times D(s,\,r)}= \sum_{k}
\|P_{k}\|_{\Pi;r} \, e^{s|k|} ,$$
where
$$\|P_{k}\|_{\Pi;r}=\sup_{|I|_{1}\le r^2, |z|_2\le r, |\bar z|_2\le r } \bigl|\sum_{\ell }\|P_{k,\ell}\|_{\alpha, C^m(\Pi)}I^{\ell_1}z^{\ell_2}{\bar z}^{\ell_3} \bigr|.$$

 Set $\mathcal{Z}=\{(k,l)\in  \Z^n\times \Z^{\bar n}:~k\neq 0,  |l|\le 2\} $ and $\mathcal{L}=\{l\in \Z^{\bar{n}}:~1\leq|l|\leq 2\}$.
  For fixed constants $0<\alpha<1 $ and $ \tau>n-1$, define $\tilde O_{\alpha,\tau}\subset \R^{n}\times \R^{\bar n}$ as
  \begin{equation}\label{dio*}
  \tilde O_{\alpha,\tau}=\bigl\{(\omega, \Omega):~  \bigl|\langle\omega, k \rangle +\la l,\Omega\ra \bigr|\ge
\frac{\alpha}{|k|^{\tau}},~   (k,l)\in \mathcal{Z}~\text{and}~|\la l,\Omega\ra |\ge \alpha ,~l\in \mathcal{L} \bigr\}.
\end{equation}

\vskip 0.2 true cm \begin{theo}\label{TT3}
Consider  the hamiltonian  $(\ref{WH1})$ and suppose $H$
  is real analytic in $(\theta, I, z,\bar z)$
on $ D(s,r)$ and $C^m$-smooth in  $\xi$ on $\Pi$.
 Let $\tilde O_{\alpha,\tau}$  be defined  by (\ref{dio*}).  For any $1\ge \alpha>0$, $\tau >n-1$ and $m\ge 0,$  there exists a sufficiently small  $\gamma>0,$
 such that
   if
   $$\|P\|_{\alpha, \Pi\times D(s,\,r)}=\epsilon \le \alpha r^2 s^{\tau'} \gamma  \, \mbox{with} \, \tau'=n+(m+1)\tau+m, $$
  then  there exists a   family of symplectic mappings $ \{\Phi_*(\xi;\cdot )\ |  \, \xi\in\Pi\}$  and a   family of hamiltonian  $ \{H_*(\xi;\cdot )\, | \ \xi\in\Pi\}$ such that    the following conclusions  hold:


    \vskip 0.2 true cm
  \noindent
  (i)
 $\Phi_* (\xi;\theta,I, z,\bar z)$  is  analytic in $ (\theta,I, z,\bar z)$ on $ D(s/2,\,r/2)$ and $C^m$-smooth in $\xi$ on $\Pi$,
    and maps $D(s/2,\,r/2)$ into $ D(s,\,r)$.
    Moreover,
     $$\|W (\Phi_* -\mbox{id} ) \ \|_{\alpha, \Pi\times D(s/2,\,r/2)}\le c\gamma,$$ where
   $W=\mbox{diag}(\rho^{-1}I_n, r^{-2}I_n, r^{-1}I_{\bar{n}}, r^{-1}I_{\bar{n}})$ with $\rho=s/20$ and
   $I_n$ being the $n$-th unit matrix.

   \vskip 0.2 true cm
  \noindent
  (ii)
   $$H_*(\xi;\theta,I, z, \bar z)=\langle\omega_*(\xi), I\rangle +\langle\Omega_*(\xi), z\bar z\rangle+ P_*(\xi;\theta, I, z, \bar z) $$
  is   analytic in $(\theta,I, z,\bar z)$ on $ D(s/2,\,r/2)$ and $C^m$-smooth in $\xi$ on $\Pi$ , with the estimates
    $$\|\omega_*-\omega\|_{\alpha, C^m(\Pi)}\le 2\epsilon/r^2, \quad \|\Omega_*-\Omega\|_{\alpha, C^m(\Pi)}\le 2\epsilon/r^2,$$ and
     $$\ \partial^{\ell_1}_I\partial^{\ell_2}_z\partial^{\ell_3}_{\bar z}P_*(\xi;\theta, 0,0,0)=0, \ \forall \,  2|\ell_1|+|\ell_2|+|\ell_3|\le 2.$$

   \vskip 0.2 true cm
  \noindent
  (iii)
  If $(\omega_*(\xi), \Omega_*(\xi))\in \tilde O_{\alpha,\tau}$, we have
 $$H\circ\Phi_*(\xi;\theta,I, z, \bar z)=H_*(\xi;\theta,I, z, \bar z).$$
Thus the hamiltonian   $H(\xi;\cdots) $  has an elliptic lower  invariant torus $\Phi_*(\xi;\T^n,0, 0, 0 )$     with $\omega_*(\xi)$ being the tangential  frequency and $\Omega_*(\xi))$  the normal frequency.

\end{theo}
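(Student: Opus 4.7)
The plan is to mimic the standard KAM iteration for elliptic lower dimensional tori (the usual second Mel'nikov set-up as in Pöschel or Kuksin), but with the crucial modification advertised in the introduction: every reciprocal small divisor $(\langle k,\omega\rangle+\langle l,\Omega\rangle)^{-1}$ that appears in the homological equation is replaced by a $C^m$-smooth surrogate that is defined on all of $\Pi$. This way the generating function, the symplectic map, and the "new" Hamiltonian are constructed for \emph{every} $\xi\in\Pi$, not only where the Diophantine condition is satisfied; the identification $H\circ\Phi_*=H_*$ is then recovered a posteriori exactly at those $\xi$ for which $(\omega_*(\xi),\Omega_*(\xi))\in\tilde O_{\alpha,\tau}$.

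First I would set up the KAM step. Truncate the Fourier series of $P$ at a suitable order $K_+\sim \log(1/\epsilon)$, and decompose the low-order Taylor part around $(I,z,\bar z)=0$ into a \emph{normal form correction} (the terms $k=0$ of order $\le 1$ in $I$ and of orders $(0,1,1)$ in $(z,\bar z)$) and a \emph{removable part} indexed by $(k,l)\in\mathcal Z$. The normal form correction updates $\omega\mapsto\omega_+$ and $\Omega\mapsto\Omega_+$ with $C^m$-norm estimates $\|\omega_+-\omega\|,\|\Omega_+-\Omega\|\lesssim \epsilon/r^2$. For the removable part, search for a generating function
\begin{equation*}
F(\xi;\theta,I,z,\bar z)=\sum_{(k,l)\in\mathcal Z,\,|k|\le K_+} F_{k,l}(\xi;I)\,z^{l_2}\bar z^{l_3}\,e^{{\rm i}\langle k,\theta\rangle},
\end{equation*}
whose coefficients should formally equal $\widehat P_{k,l}(\xi;I)/({\rm i}(\langle k,\omega(\xi)\rangle+\langle l,\Omega(\xi)\rangle))$. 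Here I would introduce the extension trick: multiply the formal coefficient by a $C^m$ cut-off $\chi_{k,l}(\xi)$ supported where $|\langle k,\omega(\xi)\rangle+\langle l,\Omega(\xi)\rangle|\ge \alpha/(2|k|^\tau)$, constructed from a fixed mollified bump, and use the Whitney-extended reciprocal in the remaining region. Because $\omega,\Omega\in C^m(\Pi)$ and the weighted norm $\|\cdot\|_{\alpha,C^m(\Pi)}$ comes precisely tuned (the weight $\alpha^{|\beta|}$ absorbs the $\alpha^{-(m+1)}$ factor from differentiating $m$ times the reciprocal of a Diophantine quantity), this extension enjoys the bound $\|\chi_{k,l}/D_{k,l}\|_{\alpha,C^m(\Pi)}\le c\,|k|^{(m+1)\tau+m}/\alpha$. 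The presence of $m+1$ factors of $|k|^\tau$ and the loss of analyticity $\rho$ in $\theta$ from the Fourier sum account for $\tau'=n+(m+1)\tau+m$ in the smallness hypothesis.

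With $F$ at hand, the time-1 map $\Phi$ of its hamiltonian vector field is the sought symplectic transformation. Standard Cauchy/domain-shrinking estimates give $\|W(\Phi-{\rm id})\|_{\alpha,\Pi\times D(s_+,r_+)}\lesssim \epsilon/(\alpha r^2 s^{\tau'})$. Now the key point: since the coefficients of $F$ are defined on all of $\Pi$, so is $\Phi$; and we define the new Hamiltonian $H_+=\langle\omega_+,I\rangle+\langle\Omega_+,z\bar z\rangle+P_+$ \emph{by the formula} one would obtain by a genuine change of variables, where $P_+$ is quadratic in $(I,z,\bar z)^{1/2}$. At those $\xi$ where $\chi_{k,l}(\xi)=1$ for all needed $(k,l)$, i.e. where the small divisor condition holds with margin, one has $H\circ\Phi=H_+$ exactly; at the other $\xi$, $H_+$ is only a formal object. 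Iterate this KAM step with geometrically shrinking $s_j\downarrow s/2$, $r_j\downarrow r/2$, $\epsilon_j\to 0$ superexponentially. Convergence of the compositions $\Phi_*=\lim \Phi_1\circ\cdots\circ\Phi_j$ in the norm $\|W\cdot\|_{\alpha,\Pi\times D(s/2,r/2)}$ and of $\omega_j\to\omega_*$, $\Omega_j\to\Omega_*$ in $\|\cdot\|_{\alpha,C^m(\Pi)}$ follows from the smallness/telescoping estimates, yielding (i) and (ii).

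The main obstacle — and the novel content compared with a classical proof — is arranging step (iii). For this I would argue as follows: pick $\xi$ with $(\omega_*(\xi),\Omega_*(\xi))\in\tilde O_{\alpha,\tau}$. Since $\omega_j,\Omega_j$ converge to $\omega_*,\Omega_*$ with errors summable in $j$ and the iterative truncation levels $K_j$ are controlled, one can show by induction that, up to adjusting the cut-off $\chi$ so that it equals $1$ on a slightly larger Diophantine set than the target $\tilde O_{\alpha/2,\tau}$, the small-divisor margin is satisfied at $\xi$ at every stage: $|\langle k,\omega_j(\xi)\rangle+\langle l,\Omega_j(\xi)\rangle|\ge \alpha/(2|k|^\tau)$ for all $(k,l)\in\mathcal Z$ with $|k|\le K_j$, and the Mel'nikov non-resonance $|\langle l,\Omega_j(\xi)\rangle|\ge\alpha/2$ for $l\in\mathcal L$. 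Hence $\chi_{k,l}^{(j)}(\xi)=1$ throughout the iteration, every cosmetic $H_{j+1}$ is a genuine transform of $H_j$, and passing to the limit gives $H\circ\Phi_*(\xi;\cdot)=H_*(\xi;\cdot)$. Since $P_*(\xi;\theta,I,z,\bar z)=O(|I|+|z|^2+|\bar z|^2)^{3/2}$ in the sense of the derivative conditions in (ii), the torus $\Phi_*(\xi;\T^n,0,0,0)$ is invariant with tangential frequency $\omega_*(\xi)$ and normal frequency $\Omega_*(\xi)$, completing the proof.
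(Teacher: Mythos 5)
Your proposal follows essentially the same route as the paper: the paper proves Theorem \ref{TT2} by exactly this scheme (smooth cutoff $\varphi_h(t_k)/t_k$ replacing the reciprocal divisor so that $F$, $\Phi$ and $H_+$ are defined for all $\xi\in\Pi$, with a margin between the cutoff threshold and the final constant $\alpha$ — realized there as an increasing sequence $\alpha_j\uparrow\alpha$ — used to show a posteriori that $\Pi_*\subset\bigcap_j\Pi_j$ and hence $H\circ\Phi_*=H_*$ on $\Pi_*$), and it explicitly omits the proof of Theorem \ref{TT3} as the same iteration with modified estimates for the $(z,\bar z)$ directions and the divisors $\langle k,\omega\rangle+\langle l,\Omega\rangle$, which is what you carry out. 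One cosmetic remark: the explicit mollified-cutoff extension already does the whole job, so your parenthetical appeal to a ``Whitney-extended reciprocal'' is unnecessary (the paper deliberately avoids Whitney's extension theorem), and in the $k=0$ block only the diagonal $z_i\bar z_i$ terms go into the normal form while the remaining $l\in\mathcal L$ terms are removed using $|\langle l,\Omega\rangle|\ge\alpha$.
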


\begin{rem}\label{EX}
 Theorems \ref{TT2} and \ref{TT3} imply that  the existence of KAM tori is equivalent to  whether   the final frequencies $\omega_*$ belong to $O_{\tau,\alpha}$ or    $(\omega_*,\Omega_*)$ belong to $\tilde O_{\tau,\alpha}.$ Note that in our theorems  we do not need  any non-degeneracy assumption  and  any strict smoothness condition for parameter as in the previous KAM theorems; thus our results are more general.
\end{rem}

\section{Stability of Diophantine Frequency  \label{c}}


For application of  Theorems \ref{TT2} and \ref{TT3}, in this section we make some preliminaries to  explore  existence of  Diophantine frequencies in  the image set $\{\omega_*(\xi):  \xi\in \Pi\}$ or  $\{(\omega_*(\xi),\Omega_*(\xi)):\xi\in\Pi\}$ .
 Note that  $\omega_*=\omega+\hat\omega$ is only  a small perturbation of $\omega$ and
        $\|\cdot \|_{C^m(\Pi)}\le \alpha^{-m} \|\cdot \|_{\alpha, C^m(\Pi)}.$ By the theorems, we have  $\|\hat\omega\|_{C^m(\Pi)}\le \sigma,$ where \ $ \sigma=\frac{2\epsilon}{ r \alpha^m}$  or $ \sigma=\frac{2\epsilon}{r^2\alpha^m}$ in the case of elliptic lower dimensional tori. This observation illustrates that  the stability of Diophantine frequency is quite important for our problem.

\vskip 0.3 true cm
{\it Stability of prescribed frequency.}

Let $\omega_0\in \omega(\Pi)=\{\omega(\xi):  \xi\in \Pi\}$ and  $\omega_*=\omega+\hat\omega$.
If there exists  a sufficiently small constant $\sigma_0>0$, such that if $\|\hat\omega\|_{C^m(\Pi)} \le \sigma_0,$
we have a  $\lambda$ with $|\lambda| << 1$,  such that $(1+\lambda) \omega_0\in \omega_*(\Pi),$ we say  the direction of $\omega_0$ is stable  in $\omega(\Pi);$
in particular, if $\lambda=0$,    we say $\omega_0$ is stable.



\vskip 0.2 true cm
The above definition suggests that the stability of Diophantine frequency corresponds to the  persistence of invariant tori with the prescribed frequency.
When the direction of  a diophantine frequency is stable, there  exists an invariant torus with the  frequency only being a dilation of the prescribed frequency. This kind of invariant tori  carry certain  information of frequencies from the integrable system.




\vskip 4 pt
In what follows,  $\omega\in \R^n$ always indicates a row vector.   The  notation $\|\cdot \|_{m}$ is used in place of  $\|\cdot \|_{C^m(\Pi)}$ for short, especially, $\|\cdot \|=\|\cdot \|_0.$ We always denote by $\omega_*=\omega+\hat\omega$ and by $\Omega_*=\Omega+\hat\Omega$     small perturbations of $\omega$ and $\Omega$, respectively.

\begin{lemma}\label{l4}
 Let  $\Pi \subset \R^n$, $\omega(\xi)    \in \R^n$ and  $  \Omega(\xi)\in \R$ belong to
 $ C^1(\Pi)$ with  $$\mathrm{rank}\ ( \partial_{\xi}\omega ) =n \ \ \mbox{and} \ \  | \Omega(\xi)|\ge c>0,\ \forall \xi\in\Pi.$$
 Denote by $U=(\omega, \Omega)^T$ the transpose of $(\omega, \Omega)$.   Let $A(\xi)= ( \partial_{\xi} U,U) $ and $$ \partial_{\xi} U =(\partial_{\xi_1} U,  \ldots, \partial_{\xi_n} U ).$$
 Suppose $\mathrm{rank} (A(\xi))  =n+1,$ for all $ \xi\in\Pi.$
   Let $\tilde\omega=\omega/\Omega$. Then we have  $$\mathrm{rank}\ ( \partial_{\xi} \tilde\omega )=n,\quad \forall \xi\in\Pi.$$
\end{lemma}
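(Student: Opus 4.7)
The plan is to show that the hypothesis $\mathrm{rank}(A)=n+1$ is equivalent to the invertibility of the $n\times n$ Jacobian $\partial_\xi\tilde\omega$, via a Schur-complement calculation that uses the nondegeneracy $|\Omega|\ge c>0$ to ``divide out'' $\Omega$. Since $A(\xi)=(\partial_\xi U, U)$ is a square $(n+1)\times(n+1)$ matrix, the assumption $\mathrm{rank}(A)=n+1$ just says $\det A(\xi)\neq 0$ on $\Pi$. So the goal reduces to proving $\det(\partial_\xi\tilde\omega)\neq 0$.

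First I would write the Jacobian of $\tilde\omega=\omega/\Omega$ explicitly by the quotient rule: for every $i,j$,
\begin{equation*}
\frac{\partial\tilde\omega_j}{\partial\xi_i}
=\frac{1}{\Omega}\,\frac{\partial\omega_j}{\partial\xi_i}
-\frac{\omega_j}{\Omega^2}\,\frac{\partial\Omega}{\partial\xi_i},
\end{equation*}
so that, viewing $\omega^T$ as an $n\times 1$ column and $\partial_\xi\Omega$ as a $1\times n$ row, one gets the matrix identity
\begin{equation*}
\Omega\,\partial_\xi\tilde\omega^{T}
=\partial_\xi\omega^{T}-\frac{1}{\Omega}\,\omega^{T}\,\partial_\xi\Omega,
\end{equation*}
in which the last term is an outer product (a rank-$1$ perturbation of $\partial_\xi\omega^T$).

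Next I would organize $A(\xi)$ as a $2\times 2$ block matrix
\begin{equation*}
A(\xi)=\begin{pmatrix}\partial_\xi\omega^{T}&\omega^{T}\\[2pt] \partial_\xi\Omega&\Omega\end{pmatrix},
\end{equation*}
with $n\times n$ upper-left block $\partial_\xi\omega^{T}$, upper-right column $\omega^{T}$, lower-left row $\partial_\xi\Omega$, and scalar lower-right entry $\Omega$. Since $\Omega\neq 0$, the Schur-complement formula gives
\begin{equation*}
\det A(\xi)=\Omega\cdot\det\!\Bigl(\partial_\xi\omega^{T}-\tfrac{1}{\Omega}\,\omega^{T}\,\partial_\xi\Omega\Bigr)
=\Omega\cdot\det\!\bigl(\Omega\,\partial_\xi\tilde\omega^{T}\bigr)
=\Omega^{n+1}\det(\partial_\xi\tilde\omega^{T}).
\end{equation*}

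Because $|\Omega(\xi)|\ge c>0$ and $\det A(\xi)\neq 0$ on $\Pi$ by hypothesis, this identity forces $\det(\partial_\xi\tilde\omega^{T})\neq 0$ on $\Pi$, i.e.\ $\mathrm{rank}(\partial_\xi\tilde\omega)=n$, which is the conclusion. The only step requiring any care is the Schur-complement computation and, in particular, keeping the row/column conventions consistent with the convention of the paper that $\omega$ is a row vector; the assumptions $\mathrm{rank}(\partial_\xi\omega)=n$ is in fact not needed separately, as it is already a consequence of $\det A\neq 0$ via the same block identity.
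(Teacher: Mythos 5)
Your proof is correct: since $A(\xi)$ is a square $(n+1)\times(n+1)$ matrix, the hypothesis $\mathrm{rank}(A)=n+1$ is indeed $\det A\neq 0$, and your quotient-rule identity $\Omega\,\partial_\xi\tilde\omega^{T}=\partial_\xi\omega^{T}-\tfrac1\Omega\,\omega^{T}\partial_\xi\Omega$ together with the Schur complement in the scalar corner $\Omega$ gives $\det A=\Omega^{n+1}\det(\partial_\xi\tilde\omega^{T})$, which yields the conclusion since $|\Omega|\ge c>0$. The paper's route is the same idea executed with rank arguments instead of determinants: it writes $U=\Omega\cdot V$ with $V=(\tilde\omega,1)^T$, observes that each column of $A$ is $\Omega$ times a column of $B=(\partial_{\xi_1}V,\ldots,\partial_{\xi_n}V,V)$ plus a scalar multiple of $V$ (which is itself a column of $B$ up to the factor $\Omega$), so column operations give $\mathrm{rank}(B)=\mathrm{rank}(A)=n+1$; since the last row of $B$ is $(0,\ldots,0,1)$, this forces $\mathrm{rank}(\partial_\xi\tilde\omega)=n$. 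Your determinant identity is slightly more quantitative and makes the equivalence $\det A\neq0\Leftrightarrow\det(\partial_\xi\tilde\omega)\neq0$ explicit, whereas the paper's column-operation argument avoids determinants altogether; both rest on the same factorization by $\Omega$. One side remark in your write-up is wrong, though it does not affect the argument: $\mathrm{rank}(\partial_\xi\omega)=n$ is indeed never used, but it is \emph{not} a consequence of $\det A\neq0$. For instance, with $n=1$, $\omega\equiv1$ and $\Omega(\xi)=\xi$ on a domain away from $0$, one has $\partial_\xi\omega=0$ while $\det A=\det\begin{pmatrix}0&1\\1&\xi\end{pmatrix}=-1\neq0$ (and the conclusion still holds there, consistent with the hypothesis being superfluous).
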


\begin{proof}
Set $V=(\tilde\omega, 1)^T$ and then
$U=\Omega \cdot V.$
 It follows that
$$A=( \partial_{\xi_1}\Omega\cdot V+\Omega \cdot \partial_{\xi_1}V, \ \ldots, \partial_{\xi_n}\Omega\cdot V+\Omega \cdot \partial_{\xi_n}V, \ \Omega\cdot V).$$

  Let
$$ B=( \partial_{\xi_1}V,   \ldots,   \partial_{\xi_n}V,~  V).$$
Note that $\partial_{\xi_j}\Omega$ are scalar functions  for all arbitrary $j $. Therefore,
 $$\mathrm{rank}(B(\xi))=\mathrm{rank}(A(\xi))=n+1$$ and then $\mathrm{rank}\ ( \partial_{\xi} \tilde\omega )=n.$
\hfill $\Box$
\end{proof}


\begin{lemma}\label{l5}
 Let   $\omega(\xi)    \in \R^n$  belong to $ C^1(\Pi)$  and $\omega^T$ be its transpose.   Suppose that for all $ \xi\in\Pi$ we have
 $$\mathrm{rank}\ ( \partial_{\xi} \omega  )=n-1  \ \  \text{and}~~ \mathrm{rank}\ (\partial_{\xi} \omega^T, \omega^T )=n.$$
 Denote by    $$\tilde\omega=(\omega_1,\ldots,\omega_{j-1},\omega_{j+1},\ldots, \omega_n ),\ \ \tilde\xi=(\xi_1, \ldots,  \xi_{i-1}, \xi_{i+1}, \ldots, \xi_n). $$
 Then, for any  $\xi\in\Pi$ there exist   $ i$ and $ j $ with $1\leq i, j\leq n$  such that $\mathrm{rank}\ ( \partial_{\tilde\xi} \tilde\omega  )=n-1.$ Moreover,  if $\omega_j(\xi)\ne 0$,   we have
  $\mathrm{rank}\ ( \partial_{\tilde\xi}   \frac{\tilde\omega}{ \ \omega_j}  )=n-1.$

  \end{lemma}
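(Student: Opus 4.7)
The plan is to extract the indices $i$ and $j$ by linear-algebra arguments applied to the matrix $(\partial_\xi \omega^T, \omega^T)$, and then to reduce the ratio assertion to Lemma \ref{l4} applied in dimension $n-1$. I first locate $i$: since $(\partial_\xi \omega^T, \omega^T)$ is an $n \times (n+1)$ matrix of rank $n$, it contains a non-singular $n \times n$ sub-matrix; but $\partial_\xi \omega^T$ alone has rank $n-1$, so no non-singular sub-matrix can omit the column $\omega^T$. Hence there exists $i \in \{1,\dots,n\}$ such that the $n \times n$ matrix $N_i$ obtained from $\partial_\xi \omega^T$ by deleting the $i$-th column and appending $\omega^T$ satisfies $\det N_i \neq 0$.

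Next I would choose $j$ by Laplace expansion of $\det N_i$ along the appended column. This gives
\[
0 \neq \det N_i = \sum_{k=1}^{n} (-1)^{n+k}\,\omega_k(\xi)\, M_{ki},
\]
where $M_{ki}$ denotes the $(k,i)$-minor of $\partial_\xi \omega^T$, that is, the determinant of the $(n-1) \times (n-1)$ matrix obtained by deleting row $k$ and column $i$. Non-vanishing of the sum forces at least one term to be non-zero, so one may pick $j$ satisfying both $\omega_j(\xi) \neq 0$ and $M_{ji} \neq 0$. Since $\partial_{\tilde\xi}\tilde\omega$ is exactly $\partial_\xi \omega^T$ with row $j$ and column $i$ removed, $|\det \partial_{\tilde\xi}\tilde\omega| = |M_{ji}| \neq 0$, which proves the first assertion.

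For the second assertion, assume $\omega_j(\xi) \neq 0$ (consistent with the choice just made) and invoke Lemma \ref{l4} in dimension $n-1$ with the substitutions $\omega \leftrightarrow \tilde\omega$, $\Omega \leftrightarrow \omega_j$, $\xi \leftrightarrow \tilde\xi$. Its two rank hypotheses amount to $\mathrm{rank}(\partial_{\tilde\xi}\tilde\omega) = n-1$, established above, and $\mathrm{rank}(\partial_{\tilde\xi}\tilde U, \tilde U) = n$ with $\tilde U = (\tilde\omega, \omega_j)^T$; after a row permutation the latter matrix is exactly $N_i$, whose non-singularity is what selected $i$ in the first place. The conclusion $\mathrm{rank}(\partial_{\tilde\xi}(\tilde\omega/\omega_j)) = n-1$ then follows.

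The key step is the joint choice of $i$ and $j$: cofactor expansion has to do double duty, producing simultaneously a non-vanishing minor (for the first claim) and a non-vanishing component $\omega_j$ (so that the ratio is well defined for the second). A minor point is that although Lemma \ref{l4} was stated with a uniform lower bound $|\Omega| \ge c$ on $\Pi$, its proof is purely algebraic via the factorisation $U = \Omega V$ and remains valid pointwise at any $\xi$ with $\omega_j(\xi) \neq 0$.
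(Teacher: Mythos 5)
Your proposal is correct and follows the same route as the paper, whose entire proof is the remark that the lemma follows by ``directly applying Lemma \ref{l4}''; your reduction to Lemma \ref{l4} in dimension $n-1$ with $(\tilde\omega,\omega_j,\tilde\xi)$ in place of $(\omega,\Omega,\xi)$ is exactly that application. The cofactor-expansion selection of $i$ and $j$ (which simultaneously yields a nonvanishing minor and $\omega_j(\xi)\neq 0$, and identifies the matrix $A$ of Lemma \ref{l4} with $N_i$ up to a row permutation) is precisely the detail the paper leaves implicit, and your observation that Lemma \ref{l4} applies pointwise despite its uniform hypothesis $|\Omega|\ge c$ is accurate.
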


\begin{proof}
This lemma can be proved by  directly applying   Lemma \ref{l4}.
\hfill $\Box$
\end{proof}

\begin{lemma}\label{l2}
 Let     $\omega(\xi)=(\omega_1(\xi),\ldots, \omega_n(\xi))$ and $\omega_*=\omega+\hat\omega  $  belong to $ C^1(\Pi),$
   where $$ \xi=(\tilde\xi, \xi_n)\in\Pi= \widetilde{\Pi}\times [ \xi_{0n}-\beta, \xi_{0n}+\beta]\subset \R^{n-1 }\times \R. $$
Set $\omega_0=\omega(\xi_0) $,  $ \xi_0=(\tilde\xi_0, \xi_{0n})\in \Pi$,
$$\omega^{\flat}=(\omega_1, \ldots, \omega_{n-1}) \quad \text{and}~~ \hat{\omega}^{\flat}=(\hat{\omega}_1, \ldots, \hat{\omega}_{n-1}).$$
Let  $  \lambda(\xi)=\omega_{0n}^{-1}\cdot \omega_{*n}(\xi) -1.$
 Suppose for $\xi\in \Pi$, $|\omega_n(\xi)|\ge c >0, $ and the Jacobian matrix $ \partial_{\tilde\xi}  (\frac{\omega^{\flat}}{\omega_n}) $ is non-degenerate.
 Then,
there exist  sufficiently small positive constants $\sigma_0 $  and $\delta_0$, such that  if $\|\hat\omega\|=\sigma<\sigma_0$ and $|\xi_n-\xi_{0n}|\le \delta_0$,
 there exists a unique
 $\xi_*=\xi_*(\xi_n)\in\Pi,$  which is continuously differentiable in $\xi_n,$
  such  that $$\omega_*(\xi_*)=(1+\lambda(\xi_*)) \omega_0.$$
Moreover,   $\lambda(\xi_*)=O(|\xi_n-\xi_{0n}|+\sigma).$

\end{lemma}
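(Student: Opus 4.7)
The plan is to reduce the vector equation $\omega_*(\xi_*)=(1+\lambda(\xi_*))\omega_0$ to a fixed ratio condition on the first $n-1$ components and then apply the implicit function theorem with quantitative control. First I would observe that by the very definition $\lambda(\xi)=\omega_{0n}^{-1}\omega_{*n}(\xi)-1$, the $n$-th component of the equation $\omega_*(\xi_*)=(1+\lambda(\xi_*))\omega_0$ holds automatically; what remains is the $n-1$ scalar equations for $j=1,\dots,n-1$, namely $\omega_{*j}(\xi_*)=(1+\lambda(\xi_*))\omega_{0j}$. Since $|\omega_n|\ge c>0$ on $\Pi$ and $\omega_{*n}$ is a $\sigma$-perturbation of $\omega_n$, one can take $\sigma_0$ small enough so that $|\omega_{*n}|\ge c/2$ throughout $\Pi$, and then the remaining equations are equivalent to
\[
\frac{\omega_{*}^{\flat}(\xi_*)}{\omega_{*n}(\xi_*)}=\frac{\omega_{0}^{\flat}}{\omega_{0n}}.
\]

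Next I would set $F(\xi)=\omega^{\flat}(\xi)/\omega_n(\xi)$ and $F_*(\xi)=\omega_{*}^{\flat}(\xi)/\omega_{*n}(\xi)$, and define the map $G(\tilde\xi,\xi_n):=F_*(\tilde\xi,\xi_n)-F(\xi_0)$ from a neighborhood of $(\tilde\xi_0,\xi_{0n})$ in $\R^{n-1}\times\R$ into $\R^{n-1}$. At the base point one has $G(\tilde\xi_0,\xi_{0n})=F_*(\xi_0)-F(\xi_0)=O(\sigma)$ by a direct computation using the bound $\|\hat\omega\|=\sigma$ and $|\omega_{*n}|,|\omega_n|\ge c/2$. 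The Jacobian $\partial_{\tilde\xi}G(\tilde\xi_0,\xi_{0n})=\partial_{\tilde\xi}F_*(\xi_0)$ differs from the hypothesized non-degenerate matrix $\partial_{\tilde\xi}F(\xi_0)$ by a term of order $\sigma$, so for $\sigma\le\sigma_0$ small it remains invertible with a uniform bound on the inverse. The standard quantitative implicit function theorem then produces, for each $\xi_n$ with $|\xi_n-\xi_{0n}|\le\delta_0$, a unique $\tilde\xi_*(\xi_n)$ near $\tilde\xi_0$ satisfying $G(\tilde\xi_*(\xi_n),\xi_n)=0$, and $\tilde\xi_*$ is $C^1$ in $\xi_n$. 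Setting $\xi_*=(\tilde\xi_*(\xi_n),\xi_n)$ gives the desired solution.

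Finally, to estimate $\lambda(\xi_*)$ I would bound $|\xi_*-\xi_0|$ first: the implicit function construction together with the $O(\sigma)$ initial residual yields $|\tilde\xi_*(\xi_n)-\tilde\xi_0|=O(|\xi_n-\xi_{0n}|+\sigma)$, hence $|\xi_*-\xi_0|=O(|\xi_n-\xi_{0n}|+\sigma)$. Then
\[
|\omega_{*n}(\xi_*)-\omega_{0n}|\le |\omega_n(\xi_*)-\omega_n(\xi_0)|+|\hat\omega_n(\xi_*)|\le C|\xi_*-\xi_0|+\sigma,
\]
so $|\lambda(\xi_*)|=|\omega_{0n}|^{-1}|\omega_{*n}(\xi_*)-\omega_{0n}|=O(|\xi_n-\xi_{0n}|+\sigma)$, which is the claimed estimate.

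The main technical point is keeping track of the quantitative constants in the implicit function theorem so that the neighborhood of solvability (controlled by $\sigma_0$ and $\delta_0$) is uniform and independent of the small perturbation $\hat\omega$; this requires a uniform lower bound for $|\det\partial_{\tilde\xi}F_*(\xi)|$ on a neighborhood of $\xi_0$, which is ensured by continuity of $\partial_{\tilde\xi}F$ on the compact $\Pi$ combined with the $C^1$-smallness of the perturbation $F_*-F$. Everything else is a direct application of the chain rule and the hypothesis $|\omega_n|\ge c$.
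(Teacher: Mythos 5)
Your proof follows essentially the same route as the paper's. The paper factors $\omega_*=\omega_{*n}\cdot\tilde\omega_*$ with $\tilde\omega_*=(\omega_n^{-1}\omega^{\flat}+a(\xi),\,1)$ and $\|a\|\le c_1\sigma$, which is exactly your reduction to the ratio equation $\omega_*^{\flat}/\omega_{*n}=\omega_0^{\flat}/\omega_{0n}$ (indeed $a=F_*-F$ in your notation); it then solves $\omega_n^{-1}\omega^{\flat}+a=\omega_{0n}^{-1}\omega_0^{\flat}$ for $\tilde\xi$ as a function of $\xi_n$ using the uniform non-degeneracy of $\partial_{\tilde\xi}(\omega^{\flat}/\omega_n)$, and reads off $\lambda(\xi_*)=\omega_{0n}^{-1}\bigl(\omega_n(\xi_*)-\omega_n(\xi_0)+\hat\omega_n(\xi_*)\bigr)=O(|\xi_n-\xi_{0n}|+\sigma)$, just as you do. The one point to watch is the norm of the perturbation: the hypothesis is $\|\hat\omega\|=\|\hat\omega\|_{C^0(\Pi)}=\sigma$, so your claims that $\partial_{\tilde\xi}F_*(\xi_0)=\partial_{\tilde\xi}F(\xi_0)+O(\sigma)$ and that $F_*-F$ is $C^1$-small are not literally granted by the statement; the paper instead treats $a=F_*-F$ only as a sup-norm-small perturbation of the non-degenerate map $\omega^{\flat}/\omega_n$. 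This costs nothing for existence (a quantitative IFT/degree argument applied to $F$ plus a $C^0$-small perturbation suffices), but uniqueness of $\xi_*$ and its $C^1$-dependence on $\xi_n$ do implicitly require some derivative control on $\hat\omega$ --- a lacuna the paper's own rather terse argument shares, and which is harmless in the intended application, since Section 3 supplies $\|\hat\omega\|_{C^m(\Pi)}\le\sigma$.
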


\begin{proof}
Rewrite as $\omega_*=(\omega_{n}+\hat\omega_n)\cdot \tilde{\omega}_*,$
where  $\tilde{\omega}_*=(\omega_n^{-1}\cdot \omega^{\flat} +a(\xi), \ 1) $ and
$$a(\xi)=-\frac{\hat\omega_n  }{\omega_n(\omega_n+\hat\omega_n)}\cdot \omega^{\flat} +\frac{1}{\omega_n+\hat\omega_n}\cdot \hat\omega^{\flat}.$$
It is easy to verify  $\|a\|\le c_1\sigma.$

Note that the above  functions are all uniformly  continuous in $(\tilde\xi, \xi_n)$. The assumption also implies   $\omega_n^{-1}\cdot \omega^{\flat}$ is non-degenerate uniformly with respect to $\tilde\xi$. Hence, there exists a small $\delta_0$ such that  if $\sigma$ is sufficiently small, for   $|\xi_n-\xi_{0n}|\le \delta_0$,  we have a unique  $\tilde\xi_*=\tilde\xi_*(\xi_n)$ and $\xi_*=(\tilde\xi_*(\xi_n),\xi_n)$, such that
$$\omega_n^{-1} (\xi_*)\cdot \omega^{\flat}   (\xi_*)+a(\xi_*)=\omega_{0n}^{-1} \cdot \omega_{0}^{\flat} .$$
 Moreover,  $\tilde\xi_*$  is differentiable in $\xi_n,$  and satisfies
 $$|\tilde\xi_*(\xi_{n})-\tilde\xi_0|\le c_2|\xi_n-\xi_{0n}|+c_3\sigma.$$
In view of
$\omega_{0n}\cdot \tilde{\omega}_* (\xi_*)=\omega_{0},$ it is easy to see that
  $$\omega_*(\xi_*)=(1+\lambda(\xi_*))\omega_0.$$
Moreover,  $$\lambda(\xi_*)=\omega_{0n}^{-1}\cdot\bigl(\omega_{n}(\xi_*)-\omega_{n}(\xi_0)+\hat\omega_n(\xi_*) \bigr)=O(|\xi_n-\xi_{0n}|+\sigma)$$ as $\xi_n\to \xi_{0n}$ and $\sigma\to 0.$
\hfill$\Box$
\end{proof}

\begin{prop}\label{l6}
 Let     $\omega(\xi)\in \R^n$ and $\omega_*(\xi)=\omega(\xi)+\hat\omega(\xi)  $  belong to $ C^1(\Pi)$.
Suppose the following Bruno non-degeneracy condition hold:
\begin{equation}\label{BR}
\mathrm{rank}\ ( \partial_{\xi} \omega  )=n-1  \ \  \text{and}~~ \mathrm{rank}\ (\partial_{\xi} \omega^T, \omega^T )=n,\ \ \forall \xi\in\Pi.
\end{equation}
 Then
  $\omega(\Pi)\cap O_{\alpha,\tau} \ne \varnothing.$  Moreover, let   $$\omega_0=\omega(\xi_0)\in O_{ \alpha, \tau},\quad \xi_0\in \Pi.$$
There exists  sufficiently small positive constants $\delta_0 $ and $\sigma_0 $   such that if $\|\hat \omega\|=\sigma<\sigma_0, $  the set $\omega_*(\Pi)$ contains a continuously differentiable one-parameter family of   Diophantine frequencies with the form  $(1+\lambda(\eta))\omega_0$, where   $\lambda$ is continuously differentiable for $|\eta |\le \delta_0 $, and satisfies    $\lambda(\eta)=O(|\eta|+\sigma).$
  \end{prop}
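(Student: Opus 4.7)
The plan is to establish the proposition in two stages: first the existence of the Diophantine base frequency $\omega_0 = \omega(\xi_0)$, then the construction of the one-parameter perturbed family and verification of its Diophantine nature.

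For the first claim, I would carry out a R\"ussmann-type measure estimate on the hypersurface $\omega(\Pi)$. The Bruno condition (\ref{BR}) yields the following dichotomy for each $k\in\Z^n\setminus\{0\}$: either (a) $k$ is not orthogonal to every $\partial_{\xi_j}\omega$, in which case $\xi \mapsto \langle k,\omega(\xi)\rangle$ has a controlled non-vanishing directional derivative, so the resonance strip $\{\omega:|\langle \omega,k\rangle|<\alpha/|k|^\tau\}$ cuts from $\omega(\Pi)$ an $(n{-}1)$-dimensional set of measure at most $C\alpha/|k|^{\tau+1}$; or (b) $k$ is orthogonal to every $\partial_{\xi_j}\omega$ on $\Pi$, in which case the second part of the Bruno condition forces $\langle k,\omega(\xi)\rangle$ to be a nonzero constant on the connected domain $\Pi$, and the resonance strip is missed entirely for $\alpha$ small. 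Summing over $k\neq 0$ (the series $\sum_{k}|k|^{-(\tau+1)}$ converges since $\tau>n-1$), the non-Diophantine part of $\omega(\Pi)$ has $(n{-}1)$-dimensional measure $O(\alpha)$. For $\alpha$ sufficiently small, $\omega(\Pi)\cap O_{\alpha,\tau}$ has positive measure, in particular is nonempty; pick any $\omega_0 = \omega(\xi_0)$ in it.

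For the second claim, I would reduce to the setup of Lemma~\ref{l2} via Lemma~\ref{l5}. Since $\omega_0$ is Diophantine, some component $\omega_j(\xi_0)$ is nonzero, and remains so in a neighborhood. Lemma~\ref{l5} then supplies indices $i,j$ with $\mathrm{rank}(\partial_{\tilde\xi}(\tilde\omega/\omega_j))=n-1$ at $\xi_0$. Relabel so that $i=j=n$ and shrink $\Pi$ to a product neighborhood $\widetilde{\Pi}\times[\xi_{0n}-\beta,\xi_{0n}+\beta]$ on which $|\omega_n|\ge c>0$ and the Jacobian $\partial_{\tilde\xi}(\omega^{\flat}/\omega_n)$ stays non-degenerate (by continuity). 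Lemma~\ref{l2} applied to $\omega_* = \omega+\hat\omega$ then yields, for $\sigma=\|\hat\omega\|<\sigma_0$ and $|\eta|:=|\xi_n-\xi_{0n}|\le\delta_0$, a unique continuously differentiable $\xi_*(\eta)=(\tilde\xi_*(\eta),\xi_{0n}+\eta)$ with
$$\omega_*(\xi_*(\eta)) = (1+\lambda(\eta))\,\omega_0, \qquad \lambda(\eta)=O(|\eta|+\sigma).$$
Finally, after shrinking $\delta_0$ and $\sigma_0$ so that $|\lambda(\eta)|\le 1/2$, one has $|\langle(1+\lambda(\eta))\omega_0,k\rangle|\ge (\alpha/2)/|k|^\tau$ for all $k\neq 0$, so $(1+\lambda(\eta))\omega_0\in O_{\alpha/2,\tau}$ and the whole family consists of Diophantine frequencies (with merely a rescaled constant).

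The main obstacle I expect is the R\"ussmann-type estimate in the first stage: one must unpack the Bruno dichotomy uniformly in $k$, keeping the transversality constants under control while summing over all nonzero lattice vectors. The second stage is essentially a repackaging of Lemmas~\ref{l5} and~\ref{l2} plus the elementary scaling argument above.
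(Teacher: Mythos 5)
Your proposal follows the paper's own route: the nonemptiness of $\omega(\Pi)\cap O_{\alpha,\tau}$ is the standard R\"ussmann-type measure estimate (which the paper simply cites as well known), and the one-parameter family is obtained exactly as in the paper by localizing near $\xi_0$ where some $\omega_j\ne 0$ and combining Lemma~\ref{l5} with Lemma~\ref{l2}. Your closing check that $(1+\lambda(\eta))\omega_0\in O_{\alpha/2,\tau}$ once $|\lambda|\le 1/2$ is a harmless addition the paper leaves implicit.
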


\begin{proof}
 It is well known from the Bruno non-degeneracy condition that if $\alpha$ is sufficiently small, $\omega(\Pi)\cap O_{ \alpha,\tau}$ is nonempty.

Since $\omega_0 $  is Diophantine, it follows  $\omega_{0j}\ne 0$ for $1\le j\le n.$
Applying  Lemma \ref{l5}, there exists a small neighbor $\Pi_0$  of $\xi_0$ in $\Pi$  and  $ i, j $,
 such that for all $\xi\in\Pi_0$,   we have
$\omega_j(\xi)\ne 0$,
$$\mathrm{rank}\ ( \partial_{\tilde\xi }  \tilde\omega )=n-1 \quad\text{and}~~ \mathrm{rank}\ ( \partial_{\tilde\xi}  \frac{\tilde\omega}{\ \omega_j} )=n-1, $$
 where $\tilde\omega$ and $\tilde\xi$ are defined  as in Lemma \ref{l5}.

Then  Lemma \ref{l2} ensures
   sufficiently small constants  $\sigma_0>0$  and $\delta_0>0$, such that     for  $\|\hat\omega\|=\sigma<\sigma_0$ and  $|\xi_j-\xi_{0j}|< \delta_0,$ we have a unique $\xi_*=\xi_*(\xi_j)$, that  is continuously  differentiable   in  $\xi_j,$
 such that  $$\omega_*(\xi_*)=(1+\lambda(\xi_*))\omega_0.$$
 Moreover,   $\lambda(\xi_{*})=O(|\xi_j-\xi_{0j}|+\sigma)).$  Let $\xi_j=\eta+\xi_{0j}$,  then we finish the proof.
 \hfill $\Box$

  \end{proof}

\begin{lemma}\label{l7}
 Let $ \omega_0=(\omega_{01}, \omega_{02})$ satisfy the Diophantine condition
\begin{equation}\label{TJ}
|k_1\omega_{01}+ k_2\omega_{02}|\ge \frac{\alpha}{|k|^{\tau}}, \ \forall k\in \Z^2\setminus\{0\},
\end{equation}
where  $0<\alpha<1$ and $\tau>1$.
Set $f_k(\lambda)= k_1(\omega_{01}+\lambda)+k_2\omega_{02},$ and  $$\Pi_{\lambda_0}=\biggl\{\lambda\in [0,\lambda_0]:~ |f_k(\lambda)|\ge  \frac{\alpha}{2|k|^{2\tau+2}}, ~\forall k\in \Z^2\setminus\{0\}\biggr\}.$$
 Then  $\Pi_{\lambda_0}$ is a non-empty subset with  $\mathrm{meas}(  [0, \lambda_0]\setminus\Pi_{\lambda_0})=o(\lambda_0)$ as $\lambda_0\to 0$.
   \end{lemma}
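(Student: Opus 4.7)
The plan is standard measure-theoretic small-divisor analysis. First, I would observe that the non-emptiness claim is immediate: at $\lambda=0$ we have $f_k(0)=k_1\omega_{01}+k_2\omega_{02}$, and the Diophantine hypothesis (\ref{TJ}) gives $|f_k(0)|\ge \alpha/|k|^\tau \ge \alpha/(2|k|^{2\tau+2})$ for every $k\in\Z^2\setminus\{0\}$, so $0\in\Pi_{\lambda_0}$ for all $\lambda_0\ge 0$.

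For the measure estimate, I would decompose the complement as $[0,\lambda_0]\setminus\Pi_{\lambda_0}=\bigcup_{k}B_k$, where
\[
B_k=\Bigl\{\lambda\in[0,\lambda_0]:\ |f_k(\lambda)|<\frac{\alpha}{2|k|^{2\tau+2}}\Bigr\}.
\]
When $k_1=0$, $f_k(\lambda)=k_2\omega_{02}$ is $\lambda$-independent and bounded below by $\alpha/|k_2|^\tau\ge \alpha/(2|k|^{2\tau+2})$ again by (\ref{TJ}); hence $B_k=\emptyset$. When $k_1\neq 0$, $f_k$ is affine in $\lambda$ with slope $k_1$, so trivially
\[
\mathrm{meas}(B_k)\le \frac{\alpha}{|k_1|\,|k|^{2\tau+2}}\le \frac{\alpha}{|k|^{2\tau+2}}.
\]
This bound alone is not enough for the $o(\lambda_0)$ claim, so the key step is to kill the low-frequency contributions using the Diophantine condition. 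For $k_1\neq 0$ and $\lambda\in[0,\lambda_0]$,
\[
|f_k(\lambda)|\ge |k_1\omega_{01}+k_2\omega_{02}|-|k_1|\lambda_0\ge \frac{\alpha}{|k|^\tau}-|k|\lambda_0,
\]
which exceeds $\alpha/(2|k|^{2\tau+2})$ whenever $|k|^{\tau+1}\lambda_0 \le \alpha/2$. Accordingly I would set the cutoff $K_0=(\alpha/(2\lambda_0))^{1/(\tau+1)}$, concluding $B_k=\emptyset$ for all $|k|\le K_0$.

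Putting the two regimes together, the only contributions come from $|k|>K_0$, and summing the slope estimate over $\Z^2$ (using that the number of $k\in\Z^2$ with $|k|=j$ is $O(j)$) yields
\[
\mathrm{meas}\bigl([0,\lambda_0]\setminus\Pi_{\lambda_0}\bigr)\le \sum_{|k|>K_0}\frac{\alpha}{|k|^{2\tau+2}}\le C\sum_{j>K_0}\frac{1}{j^{2\tau+1}}\le \frac{C}{K_0^{2\tau}}=C'\,\alpha^{\frac{1-\tau}{\tau+1}}\,\lambda_0^{\frac{2\tau}{\tau+1}}.
\]
Since $\tau>1$, the exponent $2\tau/(\tau+1)>1$, giving $\mathrm{meas}([0,\lambda_0]\setminus\Pi_{\lambda_0})=o(\lambda_0)$ as $\lambda_0\to 0$. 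The only mildly delicate point is balancing the cutoff $K_0$ so that the Diophantine exclusion of small $|k|$ matches the tail summability of $\sum |k|^{-(2\tau+2)}$; this is precisely why the exponent $2\tau+2$ (rather than $\tau$) was chosen in the definition of $\Pi_{\lambda_0}$.
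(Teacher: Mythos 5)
Your proposal is correct and follows essentially the same route as the paper: use the Diophantine condition to show the resonant sets are empty for $|k|\lesssim(\alpha/\lambda_0)^{1/(\tau+1)}$, bound the measure of each remaining resonant zone by the slope $|k_1|$, and sum the tail, arriving at the same $O\bigl(\lambda_0^{2\tau/(\tau+1)}\bigr)=o(\lambda_0)$ estimate. The only (harmless) difference is bookkeeping: you treat $k_1=0$ separately and use $|k_1|\ge 1$ for the rest, whereas the paper first discards non-comparable $k_1,k_2$ (invoking $\omega_{01},\omega_{02}\neq 0$ and smallness of $\alpha$) and then sums over $|k_1|\sim|k_2|$ — your version is, if anything, slightly cleaner.
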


\begin{proof}
Note that $\omega_{01},\omega_{02}\ne 0$. Without loss of generality, assume $|\lambda_0 |\le \frac{1}{2}|\omega_{01}| .$
 Observe that there exist  positive constants  $c_1$, $c_2$ and $ c_3$ such that
 if $ |k_1|\ge c_1|k_2|$ or $ |k_2|\ge c_2|k_1|$,  $|f_k(\lambda)|\ge c_3>0$. Then
 $ | f_k(\lambda) |\ge  \frac{\alpha}{2|k|^{2\tau+2}}$   holds  for sufficiently small $\alpha $.
 Hence, we consider the case of   $|k_2|/c_2 < |k_1|< c_1|k_2|.$

 If $|\lambda|\le \frac{\alpha}{2|k|^{\tau+1}},$  the  Diophantine assumption (\ref{TJ}) implies  $|f_k(\lambda)|\ge \frac{\alpha}{2|k|^{\tau}}.$
 Consequently, we consider these $k$ satisfying $  \frac{\alpha}{2|k|^{\tau+1}}<  \lambda_0 $ and  $|k_2|/c_2 < |k_1|< c_1|k_2|,$
 and denote by $ N_{\lambda_0}$ the set consisting of these $k$.

 For $k\in N_{\lambda_0}$, define the resonant set by
  $$\Delta_k=\biggl\{\lambda\in [ \frac{\alpha}{2|k|^{\tau+1}}, \lambda_0]:~  | f_k(\lambda) | <  \frac{\alpha}{2|k|^{2\tau+2}}\biggr\}.$$
 Then we have $\mathrm{meas}(\Delta_k)\le  \frac{\alpha}{2|k_1|\cdot |k|^{2\tau+2}}.$
In view of  $ [0, \lambda_0]\setminus\Pi_{\lambda_0}\subset \bigcup_{k\in N_{\lambda_0}}\Delta_k,$
 thus  $$\mbox{meas}( [0,\lambda_0]\setminus\Pi_{\lambda_0})\le \sum_{k\in N_{\lambda_0}}\frac{1}{|k_1|}\cdot \frac{\alpha}{2|k|^{2\tau+2}}.$$
It easily   follows
\begin{equation*}
\mbox{meas}( [0,\lambda_0]\setminus\Pi_{\lambda_0})\le c\lambda_0 \sum_{k_2>c(\frac{\alpha} {2\lambda_0})^{\frac{1}{\tau+1}} }\frac{1}{|k_2|^{\tau+1}}\le c\lambda_0 \bigl(\frac{\lambda_0}{\alpha}\bigr)^{\frac{\tau-1}{\tau+1}}.\tag*{ $\Box$ }
\end{equation*}
 \end{proof}

\begin{prop}\label{l7*}
 Let  $ \omega_0=(\omega_{01},  \omega_{02})$ satisfy   $(\ref{TJ})$ and $\omega_*=\omega_0+\hat\omega(\epsilon) $,  where
   $\hat\omega(\epsilon)$ is continuous  in the small parameter  $\epsilon\in [0,\epsilon_0]$ with $\hat\omega(0)=0.$  Then there exists a non-empty set
 $I^*_{\epsilon_0}\subset [0,\epsilon_0]$ with continuous carnality such that for $\epsilon\in I^*_{\epsilon_0}$, we have
  $$|\langle k, \omega_*(\epsilon) \rangle |\ge \frac{\alpha}{2|k|^{2\tau+2}}, \ \forall k\in \Z^2\setminus\{0\}.$$
  Moreover, when
  $\hat\omega(\epsilon)$ is continuously differentiable  on $[0, \epsilon_0]$ with $\|\hat\omega (\epsilon)\|_1\le c$,
 $I^*_{\epsilon_0}$ has positive measure.
    \end{prop}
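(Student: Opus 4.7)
The plan is to extend Lemma \ref{l7} from the explicit linear perturbation $\lambda\mapsto(\omega_{01}+\lambda,\omega_{02})$ to an arbitrary continuous (resp.\ $C^1$) one-parameter family $\epsilon\mapsto\omega_*(\epsilon)=\omega_0+\hat\omega(\epsilon)$. Set $\phi_k(\epsilon):=\langle k,\omega_*(\epsilon)\rangle=\langle k,\omega_0\rangle+\langle k,\hat\omega(\epsilon)\rangle$ and
$$B_k:=\{\epsilon\in[0,\epsilon_0]\,:\,|\phi_k(\epsilon)|<\alpha/(2|k|^{2\tau+2})\}.$$
I would partition $\Z^2\setminus\{0\}$ by angular direction exactly as in Lemma \ref{l7}. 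For non-resonant $k$ (i.e.\ $|k_1|\ge c_1|k_2|$ or $|k_2|\ge c_2|k_1|$) one has $|\langle k,\omega_0\rangle|\ge c|k|$; by continuity of $\hat\omega$ and $\hat\omega(0)=0$, the same bound (up to a constant) persists for $\omega_*(\epsilon)$ on a sufficiently small $[0,\epsilon_0]$, so $B_k=\varnothing$. For resonant $k$ (i.e.\ $|k_2|/c_2<|k_1|<c_1|k_2|$), combining the Diophantine estimate $|\langle k,\omega_0\rangle|\ge\alpha/|k|^\tau$ with $|\langle k,\hat\omega(\epsilon)\rangle|\le|k|\,\|\hat\omega(\epsilon)\|$ gives $B_k=\varnothing$ as long as $|k|^{\tau+1}\|\hat\omega(\epsilon)\|\le\alpha/2$, so only $k$ with $|k|$ above an $\epsilon$-dependent threshold can produce nontrivial bad sets.

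In the purely continuous case I would then set $I^*_{\epsilon_0}=[0,\epsilon_0]\setminus\bigcup_k B_k$. Each $B_k$ is open, and since $\phi_k(0)=\langle k,\omega_0\rangle$ satisfies the Diophantine bound with exponent $\tau\le 2\tau+2$, the origin $0$ belongs to $I^*_{\epsilon_0}$. Continuity of $\hat\omega$ with $\hat\omega(0)=0$ forces the threshold on $|k|$ above to diverge as $\epsilon\to 0$, so only finitely many $B_k$ meet $[0,\delta]$ for any $\delta>0$, and a standard Cantor-type construction near $0$ extracts uncountably many points of $I^*_{\epsilon_0}$, yielding continuum cardinality.

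In the $C^1$ case with $\|\hat\omega\|_1\le c$, the Lipschitz bound $|\phi_k(\epsilon_1)-\phi_k(\epsilon_2)|\le c|k||\epsilon_1-\epsilon_2|$ confines $B_k$ to $(\alpha/(2c|k|^{\tau+1}),\epsilon_0]$. For each resonant large $k$ I would mimic the measure estimate at the end of Lemma \ref{l7}: pair the lower bound $|\phi_k(0)|\ge\alpha/|k|^\tau$ with the Lipschitz upper bound on $\phi_k'$ to constrain $\phi_k$ to have only a bounded number of near-zeros in $[0,\epsilon_0]$, each contributing a sublevel set of measure $O(\alpha/(|k_1|\cdot|k|^{2\tau+2}))$. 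Summing over the resonant cone with $|k|$ large yields $\sum_k\mathrm{meas}(B_k)\le c\epsilon_0(\epsilon_0/\alpha)^{(\tau-1)/(\tau+1)}$, which is $o(\epsilon_0)$ as $\epsilon_0\to 0$, so $\mathrm{meas}(I^*_{\epsilon_0})>0$.

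The main obstacle will be the per-$k$ measure estimate in the $C^1$ case: the hypothesis $\|\hat\omega\|_1\le c$ provides only an upper bound on $|\phi_k'|$, so $\phi_k$ could in principle have stationary points in $[0,\epsilon_0]$ that render a naive $2\delta/|\phi_k'|_{\min}$ bound useless. The trick will be to pair the Lipschitz upper bound on $\phi_k'$ with the Diophantine lower bound on $\phi_k(0)$ to simultaneously control the location and the number of near-zeros of $\phi_k$, thereby playing the role that $f_k'\equiv k_1\ne 0$ played in Lemma \ref{l7}.
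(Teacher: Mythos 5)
Your direct attack on the sets $B_k$ has two genuine gaps, one in each half. In the continuous case, the finiteness claim is backwards: for $\epsilon\in[0,\delta]$ the estimate $|\phi_k(\epsilon)|\ge \alpha/|k|^\tau-|k|\sup_{[0,\delta]}\|\hat\omega\|$ excludes the \emph{low} modes, so what you actually get is that only $k$ with $|k|^{\tau+1}\gtrsim \alpha/\sup_{[0,\delta]}\|\hat\omega\|$ can be bad near $0$ --- that is infinitely many candidate $k$, not finitely many, and with mere continuity you have no measure (or even nonemptiness) control on $\bigcup_k B_k\cap[0,\delta]$; the ``standard Cantor-type construction'' therefore has nothing to stand on. In the $C^1$ case the obstacle you flag is fatal to your route: $\|\hat\omega\|_1\le c$ gives only an \emph{upper} bound on $|\phi_k'|$, and counting near-zeros does not control the sojourn time of $\phi_k$ in the bad zone. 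Worse, the bound you aim for, $\sum_k\mathrm{meas}(B_k)=o(\epsilon_0)$, is simply false in this generality: pick $k^*$ with $\mathrm{dist}(\omega_0,\{\langle k^*,\omega\rangle=0\})\le c\epsilon_0/2$ and let $\hat\omega(\epsilon)$ move $\omega_0$ straight onto that resonance line at speed $\le c$ during $[0,\epsilon_0/2]$ and stay there afterwards; then $\|\hat\omega\|_1\le c$, $\hat\omega(0)=0$, yet $\langle k^*,\omega_*(\epsilon)\rangle=0$ on $[\epsilon_0/2,\epsilon_0]$, so the complement of $I^*_{\epsilon_0}$ has measure $\ge\epsilon_0/2$. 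Only the weaker conclusion actually stated (positive measure) is available, and it cannot be reached by summing per-$k$ bad sets.

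The paper avoids both problems by a reduction you are missing: it factors $\omega_*(\epsilon)=\frac{\omega_{*2}(\epsilon)}{\omega_{02}}\bigl(\omega_{01}+\lambda(\epsilon),\,\omega_{02}\bigr)$ with $\lambda=\frac{\omega_{02}}{\omega_{*2}}\hat\omega_1-\frac{\omega_{01}}{\omega_{*2}}\hat\omega_2$, so the arbitrary two-component perturbation becomes a harmless scalar factor (bounded below by $1/2$) times the rigid one-parameter family of Lemma \ref{l7}, where the transversality $\partial_\lambda f_k=k_1\ne 0$ in the resonant cone supplies exactly the lower bound on the derivative that your $\phi_k$ lacks. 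All conclusions are then pulled back through $\lambda$: since $\lambda$ is continuous with $\lambda(0)=0$, its image contains $[0,\lambda_0]$ (or, if $\lambda\equiv 0$, every $\epsilon$ is good), so $I^*_{\epsilon_0}=\lambda^{-1}(\Pi_{\lambda_0})$ is nonempty and maps onto the positive-measure set $\Pi_{\lambda_0}$, giving continuum cardinality; and in the $C^1$ case $\lambda$ is Lipschitz, so $0<\mathrm{meas}(\Pi_{\lambda_0})=\mathrm{meas}(\lambda(I^*_{\epsilon_0}))\le c\,\mathrm{meas}(I^*_{\epsilon_0})$, which is how positive measure is obtained without any per-$k$ estimate in $\epsilon$-space. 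If you want to salvage your write-up, replace the $B_k$ bookkeeping by this change of parameter and invoke Lemma \ref{l7} as a black box.
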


\begin{proof}
Rewrite as $\omega_*=\frac{\omega_{*2}}{\omega_{02}}( \omega_{01}+\lambda(\epsilon), \omega_{02}),$  where
$\lambda=\frac{\omega_{02}}{\omega_{*2}}\cdot \hat\omega_{1}-\frac{\omega_{01} }{\omega_{*2}}\cdot \hat\omega_{2}.$
Let $\tilde\epsilon_0$ be sufficiently small such that for $0\le \epsilon\le \tilde\epsilon_0,$ we have
$\|\hat\omega\|\le \frac12 \min\{|\omega_{01}|, |\omega_{02}|\} $ and then $ |\frac{\omega_{*2}}{\omega_{02}}|\ge 1/2.$
Thus $\lambda(\epsilon)$ is continuous on $[0,\tilde\epsilon_0]$ with $\lambda(0)=0.$
Set $\lambda_0=\max_{[0,\tilde\epsilon_0]}|\lambda(\epsilon)|.$

 If $\lambda_0=0$,   for all  $\epsilon\in[0,\tilde\epsilon_0], $
  $\omega_*(\epsilon)=\frac{\omega_{*2}}{\omega_{02}}( \omega_{01}, \omega_{02})$. Therefore,
$$|\langle k, \omega_*(\epsilon)\rangle |\ge \frac{\alpha}{2|k|^{\tau}}, \ \forall k\in \Z^2\setminus\{0\}.$$

If $\lambda_0\ne 0$, without loss of generality, suppose $\lambda_0=\lambda(\epsilon_0)>0$  $(0<\epsilon_0\le \tilde\epsilon_0).$ Then we have $\{\lambda(\epsilon): \  \epsilon \in [0,\epsilon_0]\}\supset [0, \lambda_0].$
Let $\tilde\omega_*=( \omega_{01}+\lambda(\epsilon), \omega_{02}).$ Let $I^*_{\epsilon_0}=\lambda^{-1}(\Pi_{\lambda_0})$ be   the  inverse image of $\Pi_{\lambda_0}$
under the mapping $\lambda,$ where $\Pi_{\lambda_0}$ is defined as in  Lemma \ref{l7}. For $\epsilon\in\Pi_*$,
$\lambda(\epsilon)\in \Pi_{\lambda_0}$ and then
$$|\langle k, \omega_*(\epsilon)\rangle |=\bigl|\frac{\omega_{*2}}{\omega_{02}}\bigr| \cdot |\langle k, \tilde\omega_*(\epsilon)\rangle |\ge \frac{\alpha}{2|k|^{2\tau+2}}, \ \forall k\in \Z^2\setminus\{0\}.$$

 Recall that $\lambda$  maps $I^*_{\epsilon_0}$ onto $\Pi_{\lambda_0}.$ Then the set  $I^*_{\epsilon_0}$ has at least continuous carnality and so is non-empty.
  Moreover, when
  $\hat\omega(\epsilon)$ is differentiable, we have
  $$0< \mbox{meas}(\Pi_{\lambda_0})=\mbox{meas}(\lambda(I^*_{\epsilon_0}))\le c\cdot \mbox{meas}(I^*_{\epsilon_0}),$$
  which suggests $\Pi_{\lambda_0}$ has positive measure.
\hfill $\Box$
\end{proof}

\begin{lemma}\label{l1*}
Suppose the Brouwer degree of  the frequency mapping $\omega$ at
$\omega_0 $ on $\Pi$ is not vanishes, i.e.
$\mathrm{deg}\, (\omega, \Pi, \omega_0) \ne 0.$
Let $\hat\omega(\xi)$ and $\lambda(\xi)$ be  continuous on $\Pi.$
 Then  there exists a sufficiently small $\sigma_0>0$   such that  if $\|\hat\omega\| \le\sigma_0$ and  $\|\lambda\|\le\sigma_0$,
there exists at least one $\xi_*\in \Pi$ such that
$$ \omega_*(\xi_*)=\omega(\xi_*)+\hat\omega(\xi_*)=(1+\lambda(\xi_*))\omega_0.$$
\end{lemma}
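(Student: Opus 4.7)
The plan is to recast the problem as a zero-finding problem for a map that is a small continuous perturbation of $\omega$, and then apply the homotopy invariance of the Brouwer degree. Specifically, I would define the continuous map
$$G(\xi) = \omega(\xi) + \hat\omega(\xi) - \lambda(\xi)\,\omega_0, \qquad \xi\in\Pi,$$
so that the desired identity $\omega(\xi_*) + \hat\omega(\xi_*) = (1+\lambda(\xi_*))\omega_0$ is equivalent to $G(\xi_*) = \omega_0$. Thus it suffices to show $\deg(G, \Pi, \omega_0) \ne 0$, which will force the existence of at least one solution $\xi_*\in\Pi$.

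First I would exploit compactness of $\partial\Pi$. Since $\deg(\omega,\Pi,\omega_0)\ne 0$ is assumed well-defined, one has $\omega_0 \notin \omega(\partial\Pi)$; because $\omega(\partial\Pi)$ is a compact set, there exists
$$\delta := \min_{\xi\in\partial\Pi} |\omega(\xi)-\omega_0| > 0.$$
Next, consider the linear homotopy $H(\xi,t) = \omega(\xi) + t\bigl(\hat\omega(\xi) - \lambda(\xi)\omega_0\bigr)$ joining $\omega$ to $G$. For $\xi\in\partial\Pi$ and $t\in[0,1]$,
$$|H(\xi,t)-\omega_0| \ge |\omega(\xi)-\omega_0| - t\bigl(\|\hat\omega\| + \|\lambda\|\,|\omega_0|\bigr) \ge \delta - \sigma_0(1+|\omega_0|).$$
Choosing $\sigma_0 < \delta/(1+|\omega_0|)$ guarantees $\omega_0 \notin H(\partial\Pi,t)$ for every $t\in[0,1]$.

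With this choice, the homotopy invariance of the Brouwer degree yields
$$\deg(G,\Pi,\omega_0) = \deg(\omega,\Pi,\omega_0) \ne 0,$$
and the existence property of the degree produces at least one $\xi_*\in\Pi$ with $G(\xi_*)=\omega_0$, which is exactly the required conclusion.

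The only subtle point is verifying that the admissibility condition for the degree along the homotopy actually holds, which is handled by the uniform lower bound $\delta$ on $\partial\Pi$; this is where the hypothesis that $\Pi$ is a bounded closed domain (so that $\partial\Pi$ is compact) and the hypothesis that $\omega_0$ is a regular value in the degree sense (so $\delta>0$) come together. Everything else is a direct application of standard properties of Brouwer degree, so I do not anticipate further obstacles.
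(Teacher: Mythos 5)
Your proof is correct and follows essentially the same route as the paper: you set $\tilde\omega=\hat\omega-\lambda\,\omega_0$ (your $G=\omega+\tilde\omega$), observe it is a small perturbation of $\omega$, and invoke stability of the Brouwer degree to conclude $\deg(\omega+\tilde\omega,\Pi,\omega_0)\ne 0$, exactly as the paper does, merely spelling out the homotopy-invariance and boundary-distance details that the paper leaves implicit. (One small terminological remark: what you need is only $\omega_0\notin\omega(\partial\Pi)$, i.e.\ admissibility of the degree, not that $\omega_0$ be a regular value.)
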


\begin{proof}
 Let $\tilde\omega(\xi)=\hat\omega(\xi)-\lambda(\xi)\omega_0,$ then $\|\tilde\omega\|\le c\sigma$ with $c=1+|\omega_0|.$
The theory of   Brouwer    degree shows that, if $\sigma$ is sufficiently small,   $\mbox{deg}(\omega+\tilde\omega, \Pi, \omega_0) \ne 0.$
Thus the equation $\omega(\xi)+\tilde\omega(\xi)=\omega_0$ has at least one solution $\xi_*$ in $\Pi$. This proves the lemma.
\hfill $\Box$
\end{proof}

\begin{lemma}\label{l2*} Let $\omega(\xi)=(\omega_1(\xi), \ldots, \omega_{n-1}(\xi), \omega_{0n}) $  be continuous for $\xi\in \Pi\subset \R^{n-1}$,  where $\omega_{0n}$ is a constant.
Set
 $$\omega^{\flat}=(\omega_1, \ldots, \omega_{n-1}),\quad \hat\omega=(\hat{\omega}_1,   \ldots, \hat\omega_n),\quad \lambda =\hat\omega_{n}/\omega_{0n}.$$
Let $ \xi_0$ be an interior point in  $\Pi $, $  \omega^{\flat}_0=\omega^{\flat}(\xi_0)$ and  $\omega_0=(\omega^{\flat}_{0},~ \omega_{0n}) $.
Suppose$$\mathrm{deg}(\omega^{\flat}, \ \Pi, \  \omega^{\flat}_0) \ne 0,$$
then there exists a sufficiently small constant  $\sigma_0>0 $ such that  if $\|\hat\omega\|=\sigma<\sigma_0$,
 there exists $\xi_*\in\Pi$    such  that $\omega_*(\xi_*)=(1+\lambda(\xi_*))\omega_0.$ Moreover, $\|\lambda\|=O(\sigma/c).$


\end{lemma}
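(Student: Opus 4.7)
The plan is to reduce the problem to an $(n-1)$-dimensional Brouwer degree argument in the spirit of Lemma \ref{l1*}, exploiting that the last coordinate of $\omega$ is constant. First I would observe that the $n$-th component of the equation $\omega_*(\xi_*)=(1+\lambda(\xi_*))\omega_0$ reads
\[
\omega_{0n}+\hat\omega_n(\xi_*)=(1+\lambda(\xi_*))\omega_{0n},
\]
which is an identity thanks to the defining relation $\lambda=\hat\omega_n/\omega_{0n}$. Hence the $n$-th equation imposes no constraint, and the problem reduces to solving the first $n-1$ equations, which after rearrangement take the form $\omega^\flat(\xi)+\tilde\omega(\xi)=\omega^\flat_0$, where
\[
\tilde\omega(\xi):=\hat\omega^\flat(\xi)-\lambda(\xi)\,\omega^\flat_0.
\]

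Next I would verify that $\tilde\omega$ is a small continuous perturbation: since $|\lambda|\le \sigma/|\omega_{0n}|$, one has
\[
\|\tilde\omega\|\le \sigma+\frac{|\omega^\flat_0|}{|\omega_{0n}|}\,\sigma=O(\sigma).
\]
Because $\mathrm{deg}(\omega^\flat,\Pi,\omega^\flat_0)\ne 0$ is well-defined, $\omega^\flat_0$ lies at positive distance $d>0$ from the compact set $\omega^\flat(\partial\Pi)$. Choose $\sigma_0$ so small that $\|\tilde\omega\|<d$ whenever $\sigma<\sigma_0$. Then the linear homotopy $F_t(\xi):=\omega^\flat(\xi)+t\,\tilde\omega(\xi)$, $t\in[0,1]$, never takes the value $\omega^\flat_0$ on $\partial\Pi$. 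By homotopy invariance of the Brouwer degree,
\[
\mathrm{deg}(\omega^\flat+\tilde\omega,\Pi,\omega^\flat_0)=\mathrm{deg}(\omega^\flat,\Pi,\omega^\flat_0)\ne 0,
\]
so there exists $\xi_*\in\Pi$ with $\omega^\flat(\xi_*)+\tilde\omega(\xi_*)=\omega^\flat_0$. Combined with the automatic $n$-th component, this gives $\omega_*(\xi_*)=(1+\lambda(\xi_*))\omega_0$. The stated bound $\|\lambda\|=O(\sigma/c)$, with $c=|\omega_{0n}|$, is immediate from $\lambda=\hat\omega_n/\omega_{0n}$.

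Overall the argument is essentially a direct specialization of Lemma \ref{l1*} to the situation in which the last frequency component is fixed, with the implicit scalar factor $\lambda$ absorbed into the perturbation on the first $n-1$ coordinates. I do not expect any real obstacle; the only subtlety worth flagging is that the hypothesis $\mathrm{deg}(\omega^\flat,\Pi,\omega^\flat_0)\ne 0$ already supplies the positive separation between $\omega^\flat_0$ and $\omega^\flat(\partial\Pi)$ required to run the homotopy invariance step uniformly in $t\in[0,1]$.
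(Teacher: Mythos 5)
Your proposal is correct and follows essentially the same route as the paper: the paper likewise notes that the $n$-th component is automatic from $\lambda=\hat\omega_n/\omega_{0n}$ and reduces the remaining $n-1$ equations $\omega^{\flat}+\hat\omega^{\flat}=(1+\lambda)\omega_0^{\flat}$ to Lemma \ref{l1*}, whose degree-homotopy argument you have simply written out explicitly.
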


\begin{proof} Consider
the equation  $$\omega^{\flat}(\xi)+\hat\omega^{\flat}(\xi)=\bigl(1+\lambda(\xi)\bigr)
\omega_0^{\flat}.$$
Apply  Lemma \ref{l1*} to obtain that,  if $\sigma_0 $ is sufficiently small, the above equation has at least one solution $\xi_*\in \Pi$.
The definition of $\lambda$ yields  $\omega_{*n}=\omega_{0n}+\hat\omega_n= (1+\lambda )\omega_{0n}$. Then we have proved  the lemma.\hfill $\Box$
\end{proof}

\begin{prop}\label{l3*}
 Let   $\omega(\xi) \in \R^n$ and  $  \Omega(\xi)\in \R$ be continuous on $\Pi \subset \R^n$, and $|\Omega(\xi)|\ge c>0$ holds for all  $\xi\in \Pi$. Set  $\xi_0\in \Pi$ and $(\omega_0, \Omega_0)=(\omega(\xi_0), \Omega(\xi_0)).$
Suppose
$$\mathrm{deg}\, (\omega / \Omega,~ \Pi,~ \omega_0 / \Omega_0) \ne 0.$$ 
 Let $\lambda=\hat\Omega/ \Omega.$ Then there exists a sufficiently small $\sigma>0$ such that if   $\|\hat\omega\|+\|\hat\Omega\|\le \sigma,$
then there exists
$\xi_*\in\Pi$ such that
$$(\omega_*(\xi_*), \Omega_*(\xi_*))=  \Omega_0^{-1} \cdot \Omega(\xi_*) (1+\lambda(\xi_*))(\omega_0,\Omega_0).$$

\end{prop}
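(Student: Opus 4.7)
The plan is to reduce the statement to an application of the Brouwer degree to the perturbed quotient map, analogous to how Lemmas \ref{l1*} and \ref{l2*} are handled. Observe that the second component of the desired identity, $\Omega_*(\xi_*) = \Omega_0^{-1}\Omega(\xi_*)(1+\lambda(\xi_*))\Omega_0 = \Omega(\xi_*)+\hat\Omega(\xi_*)$, is automatic from the definition $\lambda = \hat\Omega/\Omega$. Hence the only substantive equation to solve is
$$
\frac{\omega_*(\xi_*)}{\Omega_*(\xi_*)} = \frac{\omega_0}{\Omega_0},
$$
which is equivalent to the stated vector identity once one multiplies through by $\Omega_*(\xi_*) = \Omega(\xi_*)(1+\lambda(\xi_*))$.

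First I would set $F(\xi) = \omega(\xi)/\Omega(\xi)$ and $F_*(\xi) = \omega_*(\xi)/\Omega_*(\xi)$. Since $|\Omega(\xi)| \ge c > 0$ on $\Pi$, choosing $\sigma < c/2$ ensures $|\Omega_*(\xi)| \ge c/2$, so $F_*$ is well-defined and continuous on $\Pi$. A direct computation
$$
F_*(\xi) - F(\xi) = \frac{\Omega(\xi)\hat\omega(\xi) - \omega(\xi)\hat\Omega(\xi)}{\Omega(\xi)\Omega_*(\xi)}
$$
together with the boundedness of $\omega, \Omega$ on the compact set $\Pi$ yields $\|F_* - F\| \le K(\|\hat\omega\|+\|\hat\Omega\|) \le K\sigma$ for a constant $K$ depending only on $c$ and $\sup_\Pi(|\omega|+|\Omega|)$.

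Next, because $\mathrm{deg}(F, \Pi, \omega_0/\Omega_0)$ is assumed nonzero, the value $\omega_0/\Omega_0$ does not lie in $F(\partial\Pi)$, so by compactness there is a gap $d > 0$ with $\mathrm{dist}(\omega_0/\Omega_0, F(\partial\Pi)) \ge d$. Fixing $\sigma_0$ so small that $K\sigma_0 < d$, the linear homotopy $H_t(\xi) = (1-t)F(\xi) + tF_*(\xi)$ stays at distance at least $d - K\sigma > 0$ from $\omega_0/\Omega_0$ uniformly on $\partial\Pi$ for all $t \in [0,1]$. Homotopy invariance of the Brouwer degree then gives
$$
\mathrm{deg}(F_*, \Pi, \omega_0/\Omega_0) = \mathrm{deg}(F, \Pi, \omega_0/\Omega_0) \ne 0,
$$
which forces the existence of $\xi_* \in \Pi$ with $F_*(\xi_*) = \omega_0/\Omega_0$. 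Rewriting this identity and appending the tautological formula for $\Omega_*(\xi_*)$ produces the stated conclusion.

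The argument is essentially mechanical once the right quotient map is identified; the only place requiring care is keeping $\Omega_*$ bounded away from zero so that $F_*$ is continuous, and ensuring the smallness $\sigma_0 < \min(c/2,\, d/K)$ so that the degree comparison is legitimate. Both points are handled by a single choice of $\sigma_0$ depending on $c$, $d$, and the sup-norms of $\omega$ and $\Omega$ on $\Pi$.
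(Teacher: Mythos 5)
Your proof is correct and takes essentially the same route as the paper: the paper also reduces the statement to solving $\omega_*(\xi_*)/\Omega_*(\xi_*)=\omega_0/\Omega_0$ by writing $(\omega_*,\Omega_*)=\Omega\cdot\bigl(\omega/\Omega+\hat\omega/\Omega,\;1+\hat\Omega/\Omega\bigr)$ and then invoking its degree-stability Lemmas \ref{l2*} and \ref{l1*}. The only difference is organizational: you inline the homotopy-invariance argument for the perturbed quotient map (with the explicit bounds $|\Omega_*|\ge c/2$ and $\|F_*-F\|\le K\sigma$) rather than citing those lemmas, which is the same underlying Brouwer-degree argument.
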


\begin{proof} Let $\tilde\omega=(  \omega/\Omega, 1)$ and $\hat{\tilde\omega}=(\hat\omega/ \Omega,~\hat\Omega/  \Omega).$ Then   $(\omega_*,\Omega_*)=\Omega\cdot \bigl(\tilde\omega+\hat{\tilde\omega}\bigr).$
Apply Lemma \ref{l2*} to  $\tilde\omega=(\tilde\omega^{\flat},~\tilde\omega_{n+1})$ with $\tilde\omega^{\flat}= \omega/\Omega$ and $\tilde\omega_{n+1}=1 $ to complete the proof.
\hfill $\Box$
\end{proof}

\begin{lemma}\label{l8}
Let   $O\subset \R^n$ be  an open connected bounded domain   and $\sigma>0 $.
Let $\omega_0\in O$, $\mu_0\ne 0$, $(\omega_0, \nu_0)\in \tilde O_{\alpha, \tau},$ where $\tilde O_{\alpha, \tau}$ is defined as in (\ref{dio*}) with $\bar n=1.$
 Let   $\hat\mu(\omega)$ be defined on $O$ and   $$f_k(\omega_0, \lambda)=\langle\omega_0, k\rangle +\nu_0- (1+\lambda)^{-1}\bigl( \lambda\cdot \mu_0- \hat\mu((1+\lambda)\omega_0)\bigr),$$
 where $\lambda$ is a small parameter.
  Denote by  $I_\sigma=[-\sigma, +\sigma]$   and
    $$I_{\sigma}^*= \left\{\lambda\in I_{\sigma}:  \  | f_k(\omega_0, \lambda) | \ge   \frac{\alpha}{2|k|^{2\tau+1}},~~ \forall k \in \Z^n\setminus \{0\}\right\}.$$
      Then there exists a sufficiently small $\sigma_0 ,$
   depending  on $\alpha$ and $\tau,$
   such that if  $\|\hat\mu\|_{C^1(\bar O)}=\sigma\le \sigma_0$,  $I_{\sigma}^*$ has positive measure with
  $\mbox{meas}(I_{\sigma}\setminus I_{\sigma}^*)=o(\sigma)$ as $\sigma\to 0.$

  \end{lemma}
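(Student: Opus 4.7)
The plan is to separate the $\lambda$-dependence of $f_k(\omega_0,\cdot)$ from its $k$-dependence, establish a uniform non-degeneracy for $\partial_\lambda f_k$, and then combine the Diophantine floor coming from $(\omega_0,\nu_0)\in\tilde O_{\alpha,\tau}$ with a standard resonant-interval measure estimate on the high modes. Write $f_k(\omega_0,\lambda)=\langle\omega_0,k\rangle+\nu_0-g(\lambda)$, with
$$g(\lambda)=(1+\lambda)^{-1}\bigl(\lambda\mu_0-\hat\mu((1+\lambda)\omega_0)\bigr).$$
For $|\lambda|\le\sigma$, and with $\sigma_0$ small enough that $(1+\lambda)\omega_0\in O$, a direct computation gives $g(0)=-\hat\mu(\omega_0)$ and
$$g'(\lambda)=(1+\lambda)^{-2}\bigl(\mu_0+\hat\mu((1+\lambda)\omega_0)-(1+\lambda)\nabla\hat\mu((1+\lambda)\omega_0)\cdot\omega_0\bigr).$$
Since $\|\hat\mu\|_{C^1(\bar O)}=\sigma$, this yields $|g(\lambda)|\le c_1\sigma$ and $|g'(\lambda)-\mu_0|\le c_2\sigma$, so that $|\partial_\lambda f_k(\omega_0,\lambda)|=|g'(\lambda)|\ge|\mu_0|/2$ uniformly in $k$ and $\lambda\in I_\sigma$, provided $\sigma_0\le|\mu_0|/(2c_2)$.

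Choose the threshold $N=(\alpha/(4c_1\sigma))^{1/\tau}$ and split the sum over $k\neq 0$. For $|k|\le N$, the defining inequality of $\tilde O_{\alpha,\tau}$ (applied with $l=1$) gives $|\langle\omega_0,k\rangle+\nu_0|\ge\alpha/|k|^\tau\ge 4c_1\sigma\ge 4|g(\lambda)|$, so by the triangle inequality $|f_k(\omega_0,\lambda)|\ge\alpha/(2|k|^\tau)\ge\alpha/(2|k|^{2\tau+1})$ for every $\lambda\in I_\sigma$; these modes produce no bad set. For $|k|>N$, the lower bound $|\partial_\lambda f_k|\ge|\mu_0|/2$ and a routine linearization give
$$\mbox{meas}\bigl\{\lambda\in I_\sigma:\,|f_k(\omega_0,\lambda)|<\alpha/(2|k|^{2\tau+1})\bigr\}\le\frac{2\alpha}{|\mu_0|\,|k|^{2\tau+1}}.$$

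Summing and comparing to $\int_N^\infty r^{n-2\tau-2}\,dr$ (which converges since $2\tau+1>n$),
$$\mbox{meas}(I_\sigma\setminus I_\sigma^*)\le\sum_{|k|>N}\frac{2\alpha}{|\mu_0|\,|k|^{2\tau+1}}\le\frac{c_3\alpha}{|\mu_0|}N^{n-2\tau-1}\le c_4\,\sigma^{(2\tau+1-n)/\tau}.$$
Because $\tau>n-1$, the exponent $(2\tau+1-n)/\tau=1+(\tau+1-n)/\tau$ is strictly greater than $1$, so the right-hand side is $o(\sigma)$ as $\sigma\to 0$ and, in particular, strictly less than $2\sigma=\mbox{meas}(I_\sigma)$ for $\sigma_0$ small enough; hence $I_\sigma^*$ has positive measure. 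I expect the only delicate point to be this matching of exponents: the sharpened cutoff $\alpha/(2|k|^{2\tau+1})$ in the definition of $I_\sigma^*$ (rather than $\alpha/|k|^\tau$) is exactly what is needed for the high-mode tail to dominate $\sigma$ under the Diophantine hypothesis $\tau>n-1$, while still allowing the low-mode part to be absorbed into the Diophantine floor.
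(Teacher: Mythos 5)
Your proposal is correct and follows essentially the same argument as the paper: the uniform bound $|\partial_\lambda f_k|\ge|\mu_0|/2$ from $\mu_0\ne 0$, the Diophantine floor $|\langle\omega_0,k\rangle+\nu_0|\ge\alpha/|k|^\tau$ absorbing the modes with $|k|^\tau\lesssim\alpha/\sigma$, and a resonant-interval measure estimate summed over the remaining high modes, yielding the same bound of order $\sigma(\sigma/\alpha)^{(\tau+1-n)/\tau}=o(\sigma)$. The only cosmetic difference is that you sum $|k|^{-(2\tau+1)}$ directly over $|k|>N$ and then substitute $N\sim(\alpha/\sigma)^{1/\tau}$, whereas the paper first converts each $\mathrm{meas}(\Delta_k)$ bound into $c\sigma/|k|^{\tau+1}$ via the cutoff condition before summing; both give the identical exponent.
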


   \begin{proof}
In view of $\mu_0\ne 0$,   there exists a sufficiently small  $\sigma_0>\sigma$  such that
$$| \partial_{\lambda}  f_k(\lambda,\omega_0) | \ge |\mu_0|/2 $$ holds for $|\lambda|\le \sigma_0$ and $\|\hat\mu\|_1\le \sigma $.

For  $ \lambda\in I_{ \sigma} ,$ we have
$$\bigl|(1+\lambda)^{-1}\cdot \lambda\cdot \mu_0 -(1+\lambda)^{-1}\cdot \hat\mu\bigl((1+\lambda)\omega_0\bigr)\bigr|\le c\sigma.$$
  Recall  that $|\langle\omega_0, k\rangle +\nu_0|  \ge \frac{\alpha}{|k|^{\tau}}.$
  If $c\sigma \le \frac{\alpha}{2|k|^{\tau}},$
  then $|f_k(\lambda,\omega_0)| \ge \frac{\alpha}{2|k|^{\tau}} $ holds.
  Thus,    we only need to consider the  case of   $c\sigma >  \frac{\alpha}{2|k|^{\tau}}.$

For each $k\in \Z^n\setminus \{0\}$, define
$$\Delta_k=\left\{\lambda\in I_{\sigma}: \  \  | f_k(\omega_0, \lambda) | <  \frac{\alpha}{2|k|^{2\tau+1}}\right\}.$$
Then $$\mathrm{meas}(\Delta_k)\le \frac{\alpha}{\mu_0|k|^{2\tau+1}}\le c\sigma
      \frac{1}{|k|^{\tau+1}}.$$
  Note that $\tau>n-1$ and  $$I_{\sigma}\setminus I_{\sigma }^*\subset \bigcup\limits_{  2c \sigma  |k|^{\tau}> \alpha  }\Delta_k.$$
  Therefore,
  $$\mbox{meas}(I_{\sigma}\setminus I_{\sigma }^*)\le \sum\limits_{   2c \sigma  |k|^{\tau}> \alpha}\mbox{meas}(\Delta_k)\le c\sigma \bigl(\frac{\sigma}{\alpha}\bigr)^{\frac{\tau-n+1}{\tau}}.$$
When  $\sigma_0$ is sufficiently small such that $c(\frac{\sigma_0}{\alpha})^{\frac{\tau-n+1}{\tau}}<1$,    $I_{\sigma}^*$  is non-empty with  positive measure.
\hfill $\Box$
\end{proof}



\begin{prop}\label{l9}\

\vskip 0.2 true cm\noindent
(1) Let $(\omega_0, \Omega_0)\in \tilde O_{\alpha, \tau}$ with $\bar n=1,$ and
\begin{equation}
I_{\sigma}^{*}=\{\lambda\in I_{\sigma}:~((1+\lambda)\omega_0, \Omega_0)\in \tilde O_{\alpha/2,2\tau+1}\}.
\end{equation}
If $\sigma $ is sufficiently small, then $I_{\sigma }^{*}$ is   non-empty  and satisfies
 $$\mbox{meas}(I_{\sigma}\setminus I_{\sigma}^*)\le c\sigma\bigl(\frac{\sigma}{\alpha}\bigr)^{\frac{\tau-n+1}{\tau}}.$$

\vskip 0.2 true cm \noindent
(2) Let $\omega(\xi)\in C(\Pi)$ and   $\omega_0=\omega(\xi_0).  $ Suppose   $\mathrm{deg}(\omega, \Pi, \omega_0) \ne 0.$
Let $\omega_*=\omega+\hat \omega$ and $\Omega_*=\Omega_0+\hat\Omega.$
   Then, there exists a sufficiently small constant $\sigma_0>0$, such that if
 $\|\hat\omega\|+\|\hat\Omega\|=\sigma\le\sigma_0,$
\,  for $ \lambda\in I_{\sigma }^*$
  there exists $\xi_*\in \Pi$ such that
 $$(\omega_*(\xi_*),\Omega_*(\xi_*)) =(1+\hat\Omega(\xi_*)/{\Omega_0})\cdot \bigl((1+\lambda)\omega_0, \Omega_0\bigr).$$
  \end{prop}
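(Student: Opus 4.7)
The plan is to prove the two parts separately: part (1) via a Diophantine-excision measure estimate (in the spirit of Lemma \ref{l7}), and part (2) via a direct application of Lemma \ref{l1*}.

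For (1), the second resonance condition $|l\Omega_0|\ge \alpha/2$ ($l\in\mathcal{L}$) is automatic, since $(\omega_0,\Omega_0)\in\tilde O_{\alpha,\tau}$ already forces $|l\Omega_0|\ge\alpha$. Thus the only constraint to be verified is
$$|g_{k,l}(\lambda)|:=|(1+\lambda)\langle\omega_0,k\rangle+l\Omega_0|\ge\frac{\alpha}{2|k|^{2\tau+1}},\quad (k,l)\in\mathcal Z.$$
I observe that $g_{k,l}$ is affine in $\lambda$ with slope $\langle\omega_0,k\rangle$, and $|g_{k,l}(0)|\ge \alpha/|k|^\tau$; moreover, since $(k,0)\in\mathcal Z$ is admissible, one has $|\langle\omega_0,k\rangle|\ge \alpha/|k|^\tau$. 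If $\sigma|\langle\omega_0,k\rangle|\le \alpha/(2|k|^\tau)$, then throughout $I_\sigma$ the triangle inequality gives $|g_{k,l}(\lambda)|\ge\alpha/(2|k|^\tau)\ge\alpha/(2|k|^{2\tau+1})$, and no exclusion arises. Otherwise, using the crude bound $|\langle\omega_0,k\rangle|\le |\omega_0|\cdot|k|$, this case forces $|k|^{\tau+1}\gtrsim \alpha/\sigma$, while the slope is at least $\alpha/(2\sigma|k|^\tau)$, so the resonant set $\Delta_{k,l}=\{\lambda\in I_\sigma:|g_{k,l}(\lambda)|<\alpha/(2|k|^{2\tau+1})\}$ has measure at most $c\sigma/|k|^{\tau+1}$. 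Summing over $|l|\le 2$ and over $|k|>c(\alpha/\sigma)^{1/(\tau+1)}$, and invoking the tail estimate $\sum_{|k|>R_0}|k|^{-(\tau+1)}\le cR_0^{n-1-\tau}$ (convergent precisely because $\tau>n-1$) yields the stated measure bound; non-emptiness of $I_\sigma^*$ follows when $\sigma$ is small enough for this bound to be strictly smaller than $|I_\sigma|=2\sigma$.

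For (2), the $\Omega$-component of the target identity is automatic, since $\Omega_*(\xi)=\Omega_0+\hat\Omega(\xi)=(1+\hat\Omega(\xi)/\Omega_0)\Omega_0$. The problem therefore reduces to finding $\xi_*\in\Pi$ with $\omega_*(\xi_*)=(1+\tilde\lambda(\xi_*))\omega_0$, where
$$\tilde\lambda(\xi):=(1+\hat\Omega(\xi)/\Omega_0)(1+\lambda)-1=\lambda+\hat\Omega(\xi)/\Omega_0+\lambda\,\hat\Omega(\xi)/\Omega_0.$$
Since $|\lambda|\le\sigma$ and $\|\hat\Omega\|\le\sigma$, the function $\tilde\lambda$ is continuous on $\Pi$ with $\|\tilde\lambda\|\le c\sigma$. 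Applying Lemma \ref{l1*} with $\tilde\lambda$ in place of $\lambda$, and invoking $\deg(\omega,\Pi,\omega_0)\ne 0$, produces the required $\xi_*$ once $\sigma$ is sufficiently small.

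The main obstacle is the measure estimate in part (1): the case dichotomy together with the tail sum in $k$ must be executed so that only the information actually at hand (namely $|\langle\omega_0,k\rangle|\ge \alpha/|k|^\tau$ and the Diophantine exponent $\tau>n-1$) is used, with the slope $\langle\omega_0,k\rangle$ playing the same role here that the derivative bound $|\mu_0|/2$ played in Lemma \ref{l8}. Part (2) is essentially a re-labeling of Lemma \ref{l1*}, the key observation being that the natural perturbation parameter to feed into that lemma is $\tilde\lambda$ rather than $\lambda$ itself.
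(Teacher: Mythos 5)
Your part (2) is correct and is in substance the paper's own argument: the paper factors $(\omega_*,\Omega_*)=(1+\hat\Omega/\Omega_0)\cdot(\omega+\tilde\omega,\Omega_0)$ with $\tilde\omega=(\Omega_0+\hat\Omega)^{-1}(\Omega_0\hat\omega-\hat\Omega\omega)$ and applies Lemma \ref{l1*} to $\omega+\tilde\omega=(1+\lambda)\omega_0$; multiplying that equation by $1+\hat\Omega/\Omega_0$ shows it coincides with the equation $\omega_*=(1+\tilde\lambda)\omega_0$ that you solve with the $\xi$-dependent dilation $\tilde\lambda$, so the two reductions to the degree lemma are equivalent bookkeeping.

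Part (1), however, has a genuine gap: your excision scheme does not deliver the stated exponent. You determine the range of $k$ requiring excision from the crude bound $|\langle\omega_0,k\rangle|\le|\omega_0|\,|k|$, which only forces $|k|\gtrsim(\alpha/\sigma)^{1/(\tau+1)}$; summing your bound $\mathrm{meas}(\Delta_{k,l})\le c\sigma|k|^{-(\tau+1)}$ over that range gives $c\sigma(\sigma/\alpha)^{\frac{\tau-n+1}{\tau+1}}$, which for small $\sigma/\alpha$ is strictly weaker than the claimed $c\sigma(\sigma/\alpha)^{\frac{\tau-n+1}{\tau}}$, so the step ``yields the stated measure bound'' fails as written. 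The paper obtains (1) by citing Lemma \ref{l8} with $\nu_0=\mu_0=l\Omega_0$ and $\hat\mu=0$, i.e.\ by first dividing the small divisor by $1+\lambda$: for $(1+\lambda)^{-1}\bigl[(1+\lambda)\langle\omega_0,k\rangle+l\Omega_0\bigr]=\langle\omega_0,k\rangle+l\Omega_0-(1+\lambda)^{-1}\lambda\, l\Omega_0$ the perturbation is $O(\sigma)$ \emph{uniformly in $k$} and the slope in $\lambda$ is of size $|l\Omega_0|\ge\alpha$, independent of $k$; hence excisions are needed only for $|k|\gtrsim(\alpha/\sigma)^{1/\tau}$, each of measure $\lesssim\sigma|k|^{-(\tau+1)}$, and the tail sum produces the exponent $\frac{\tau-n+1}{\tau}$. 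Your unnormalized set-up can also be repaired without that trick: if $\Delta_{k,l}\ne\emptyset$ then $|\langle\omega_0,k\rangle|\le|l\Omega_0|+\tfrac{\alpha}{2}+\sigma|\langle\omega_0,k\rangle|\le c(|\Omega_0|)$, and combining this with your case-2 condition $|\langle\omega_0,k\rangle|>\frac{\alpha}{2\sigma|k|^{\tau}}$ again forces $|k|\gtrsim(\alpha/\sigma)^{1/\tau}$ — but some such observation is indispensable, and as written your proof establishes only the weaker bound (enough for non-emptiness and $o(\sigma)$, not for the inequality actually asserted).
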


\begin{proof}
The first  conclusion  follows directly from  Lemma \ref{l8}.
Now we prove the second one.

Rewrite as
$$(\omega_*,\Omega_*)=(1+\hat\Omega/\Omega_0)\cdot (\omega+\tilde\omega,\Omega_0)$$
where
$$\tilde\omega=(\Omega_0+\hat\Omega)^{-1}\cdot (\Omega_0\cdot\hat\omega- \hat\Omega \cdot \omega).$$
Observe that $\|\tilde\omega\| \le c\sigma.$   Lemma \ref{l1*} shows, there exists a  sufficiently small $\sigma_0 $, such that  if $\|\hat\omega\|+\|\hat\Omega\|=\sigma\le\sigma_0$ and $|\lambda|\le \sigma $,
we have
$$\mathrm{deg}\bigl( \omega+\tilde\omega,~ \Pi, ~ (1+\lambda)\omega_0\bigr)\ne 0.$$
Thus, for $\lambda\in I_{\sigma^*} $ there exists   $\xi_*\in\Pi$ such that   $\omega(\xi_*)+\tilde\omega(\xi_*)=(1+\lambda)\omega_0.$
 \hfill $\Box$


\end{proof}

\begin{prop}\label{l8*}
Let   $O\subset \R^n$ be an open bounded connected  domain, and  $$\Omega(\omega)=\beta+\omega \cdot M, \ \ \omega\in O, $$ where $\beta_{0}=(\beta_{ 1}, \ldots, \beta_{ \bar n})$ and $M$ is an $ n\times \bar n$ constant matrix.
Let $\omega_0\in O$ and $ \Omega_0=\Omega(\omega_0).$ Suppose
 $(\omega_0,  \Omega_0)\in \tilde O_{\alpha, \tau}$  and $\beta$ satisfies
$$\la l, \beta\ra \ne 0, \quad \forall ~l\in \mathcal{L}.$$
Set $\hat\omega(\omega), \hat\Omega(\omega)\in C^1(O)$ and
 $$\bigl(\omega_*(\omega), \, \Omega_*(\omega)\bigr)=\bigl(\omega, \, \Omega \bigr)+\bigl(\hat\omega,\, \hat\Omega \bigr).$$
Then, there exists a sufficiently small $\sigma_0 $, such that if $\|\hat\omega\|_1+\|\hat\Omega\|_1=\sigma\le \sigma_0$,
there exists  a non-empty subset
$I_{\sigma}^*\subset   I_{\sigma}$
with the estimate
 $$
\mbox{meas}(I_{\sigma}\setminus I_{\sigma}^*)=o(\sigma).$$
  Moreover,  for any  $\lambda\in I_{\sigma}^*$ ,   there exists  $\varpi\in O$ such that
$$ \omega_*(\varpi)=(1+\lambda)\omega_0 \  \ \mbox{and}\ \    \bigl(\omega_*(\varpi), \Omega_*(\varpi)\bigr)\in \tilde O_{\alpha/4,2\tau+1} .$$

\end{prop}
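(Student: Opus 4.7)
The plan is to proceed in three stages, in the spirit of Lemma \ref{l8} and Proposition \ref{l9}: first, for each $\lambda\in I_\sigma$, solve for $\varpi=\varpi(\lambda)\in O$ with $\omega_*(\varpi)=(1+\lambda)\omega_0$; second, expand $g_{k,l}(\lambda):=\la \omega_*(\varpi),k\ra +\la l,\Omega_*(\varpi)\ra$ as an essentially affine function of $\lambda$; and third, remove a $\lambda$-set of measure $o(\sigma)$ to guarantee $(\omega_*(\varpi),\Omega_*(\varpi))\in\tilde O_{\alpha/4,2\tau+1}$. The first stage is a standard inverse-function argument: since $\|\hat\omega\|_1\le\sigma$, the map $F(\omega):=\omega+\hat\omega(\omega)$ is a near-identity $C^1$-diffeomorphism, so $\varpi(\lambda):=F^{-1}((1+\lambda)\omega_0)$ is well-defined, continuously differentiable in $\lambda$, and satisfies $\varpi(\lambda)=\omega_0+O(|\lambda|+\sigma)$.

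For the second stage, using $\Omega(\omega)=\beta+\omega M$ together with $\varpi M=(1+\lambda)\omega_0 M-\hat\omega(\varpi)M$, a direct computation yields
$$\Omega_*(\varpi(\lambda))=(1+\lambda)\Omega_0-\lambda\beta+R(\lambda),\quad R(\lambda):=\hat\Omega(\varpi)-\hat\omega(\varpi)M,$$
with $|R|,|R'|=O(\sigma)$. Consequently
$$g_{k,l}(\lambda)=(1+\lambda)A_{k,l}-\lambda\la l,\beta\ra +\la l,R(\lambda)\ra,\quad g_{k,l}'(\lambda)=\la \omega_0,k+Ml\ra +O(\sigma),$$
where $A_{k,l}:=\la \omega_0,k\ra +\la l,\Omega_0\ra$ satisfies $|A_{k,l}|\ge\alpha/|k|^\tau$ by hypothesis. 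The second Diophantine condition $|\la l,\Omega_*(\varpi)\ra |\ge\alpha/4$ for $l\in\mathcal L$ is then immediate from $\la l,\Omega_*(\varpi)-\Omega_0\ra =O(\sigma)$ and $|\la l,\Omega_0\ra |\ge\alpha$.

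For the third stage, the case $l=0$ reduces to $|g_{k,0}|=(1+\lambda)|\la\omega_0,k\ra |\ge\alpha/(2|k|^\tau)\ge\alpha/(4|k|^{2\tau+1})$. For $1\le|l|\le 2$, set $c_0:=\min_{l\in\mathcal L}|\la l,\beta\ra |>0$ and split on $|k|$. In the regime $|k|^\tau\le\alpha/(C\sigma)$ (with $C$ absorbing the bounds on $|\la l,\beta\ra|$ and $|R|$), the crude estimate $|g_{k,l}-A_{k,l}|\le\sigma(|A_{k,l}|+|\la l,\beta\ra |)+c\sigma$ gives $|g_{k,l}|\ge\alpha/(2|k|^\tau)\ge\alpha/(4|k|^{2\tau+1})$, so $\Delta_{k,l}:=\{\lambda\in I_\sigma:|g_{k,l}(\lambda)|<\alpha/(4|k|^{2\tau+1})\}=\emptyset$. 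In the regime $|k|^\tau>\alpha/(C\sigma)$, I split further on $|A_{k,l}-\la l,\beta\ra |$: if this is $\ge c_0/2$, then $|g_{k,l}'|\ge c_0/4$ and $|\Delta_{k,l}|\le 2\alpha/(c_0|k|^{2\tau+1})$; if it is $<c_0/2$, then $|A_{k,l}|\ge c_0/2$ and the direct bound $|g_{k,l}|\ge c_0/4\ge\alpha/(4|k|^{2\tau+1})$ applies (since $|k|^{2\tau+1}\ge|k|^\tau\ge\alpha/c_0$ once $\sigma$ is small), so again $\Delta_{k,l}=\emptyset$. Summing the only surviving contribution yields
$$\mathrm{meas}(I_\sigma\setminus I_\sigma^*)\le c\alpha\!\!\sum_{|k|^\tau>\alpha/(C\sigma)}\!\!|k|^{-(2\tau+1)}\le c\alpha^{(n-\tau-1)/\tau}\,\sigma^{1+(\tau-n+1)/\tau}=o(\sigma)$$
as $\sigma\to 0$, using $\tau>n-1$.

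The main obstacle is the measure step: unlike Lemma \ref{l8}, where the analogue of $g_{k,l}'$ is bounded below by a uniform constant $|\mu_0|/2$, here $g_{k,l}'(\lambda)=\la\omega_0,k+Ml\ra+O(\sigma)$ can be arbitrarily small because $k+Ml\notin\Z^n$ in general, so the Diophantine condition on $\omega_0$ gives no lower bound. The key observation that unblocks the argument is that when the derivative is small, $|A_{k,l}-\la l,\beta\ra|$ is small, which forces $|A_{k,l}|\ge c_0/2$ and allows the direct control of $|g_{k,l}|$ to take over. This is precisely where the non-degeneracy hypothesis $\la l,\beta\ra\neq 0$ for $l\in\mathcal L$ is used.
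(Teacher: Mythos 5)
Your argument is correct, and its skeleton matches the paper's: invert the near-identity map $\omega\mapsto\omega+\hat\omega(\omega)$ so that $\varpi(\lambda)$ with $\omega_*(\varpi)=(1+\lambda)\omega_0$ exists and is $C^1$ in $\lambda$, then excise a resonant $\lambda$-set of measure $o(\sigma)$ using $\la l,\beta\ra\ne 0$. Where you diverge is the measure step. The paper first factors out $(1+\lambda)$, writing $(1+\lambda)^{-1}\bigl(\omega_*\circ\varpi,\Omega_*\circ\varpi\bigr)=\bigl(\omega_0,\ \Omega_0-(1+\lambda)^{-1}(\lambda\beta-\tilde\Omega(\lambda))\bigr)$; this freezes the tangential term $\la\omega_0,k\ra$ so that the small divisor, for each fixed $l\in\mathcal{L}$, has $\lambda$-derivative bounded below by $|\la l,\beta\ra|/2$ \emph{uniformly in $k$}, and the statement then follows by applying Lemma \ref{l8} to $\nu_0=\la l,\Omega_0\ra$, $\mu_0=\la l,\beta\ra$, $\hat\mu=\la l,\tilde\Omega\ra$ (the $\alpha/2$ constant becoming $\alpha/4$ after undoing the $(1+\lambda)$ scaling). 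You instead keep the unscaled divisor $g_{k,l}(\lambda)=(1+\lambda)A_{k,l}-\lambda\la l,\beta\ra+\la l,R\ra$, whose derivative $A_{k,l}-\la l,\beta\ra+O(\sigma)$ admits no uniform lower bound, and you compensate with the dichotomy: either the derivative is $\ge c_0/4$ (standard resonant-zone estimate), or $|A_{k,l}|\ge c_0/2$ and the divisor is bounded below outright in the regime $|k|^{\tau}>\alpha/(C\sigma)$. Both routes use the hypothesis $\la l,\beta\ra\ne 0$ at exactly the same point and yield the same asymptotics $c\,\alpha^{(n-\tau-1)/\tau}\sigma^{1+(\tau-n+1)/\tau}=o(\sigma)$; the paper's rescaling buys a clean reduction to an already-proved lemma, while your version is self-contained and shows the conclusion does not really need the uniform transversality, only the alternative ``small derivative $\Rightarrow$ large divisor''.
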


\begin{proof}
At first we note that if $\sigma$ is sufficiently small, $\omega_*$ is also  non-degenerate in $\omega$ on $O$. So without loss of generality, we assume $\omega_+=\omega_*(\omega)$ as independent parameter. The inverse $\omega=\omega(\omega_+)$ is well defined  in a little smaller domain $O_+\subset O$.
Then $\hat\omega\circ\omega(\omega_+)$ and $\hat\Omega\circ\omega(\omega_+)$ depend on $\omega_+$ and satisfy $\|\hat\omega\|_1+\|\hat\Omega\|_1\le c\sigma$
on $O_+$.
Thus $\omega=\omega_+-\hat\omega(\omega_+)$ and $\Omega\circ\omega(\omega_+)=\beta+\omega_+\cdot M-\hat\omega(\omega_+)\cdot M.$
Then we have
 $$ \Omega_*\circ\omega(\omega_+)=\beta+\omega_+\cdot M+\tilde\Omega(\omega_+),\ \  \tilde\Omega(\omega_+)=-\hat\omega\circ\omega(\omega_+)\cdot M +\hat\Omega\circ\omega(\omega_+).$$
Let $\omega_+=(1+\lambda)\omega_0$ and $\varpi(\lambda)=\omega((1+\lambda)\omega_0).$
Then $\varpi(\lambda)$ is well defined  for sufficiently small $\lambda$.
 Then we consider

$$
\bigl(\omega_{*}\circ \varpi(\lambda), \  \Omega_{*}\circ \varpi(\lambda)\bigr)=\bigl((1+\lambda)\omega_0,
 \Omega_0+\lambda\omega_0\cdot M+\tilde\Omega((1+\lambda)\omega_0)\bigr)
$$
Rewrite as
\begin{equation}\label{ZT}
(1+\lambda)^{-1} \bigl(\omega_{*}\circ \varpi(\lambda), \ \Omega_{*}\circ \varpi(\lambda)\bigr)=\bigl( \omega_0, \Omega_0 -(1+\lambda)^{-1}(\lambda\cdot \beta
-\tilde{\Omega}(\lambda)\bigr),
\end{equation}
where $$\tilde{\Omega}(\lambda)= \tilde\Omega((1+\lambda)\omega_0).$$

To apply Lemma \ref{l8}, for each fixed $l\in \mathcal{L}$, let   $\nu_0=\la l,  \Omega_0\ra,$ $\mu_0=\la l,\beta \ra, \hat\mu=\la l, \tilde\Omega \ra $. Then there exists a sufficiently small  $\sigma_0>0$  such that for $\sigma\le \sigma_0$, we have
$ I_{\sigma}^{ l*}\subset I_{\sigma} $ with the estimate
$$\mbox{meas}(I_{\sigma}\setminus I_{\sigma}^{l*})=o(\sigma),$$
  such that for each $l\in \mathcal{L}$ and $\lambda\in I_{\sigma}^{ l*},$

\begin{equation}\label{ZT*}
\bigl| \la \omega_0, k\ra + \nu_0-(1+\lambda)^{-1} (\lambda\cdot \mu_0-
\hat{\mu}) \bigr|\geq \frac{\alpha}{2|k|^{ 2\tau+1}},\quad k\in \Z^n\setminus \{0\}.
 \end{equation}

Define $$I_{\sigma}^{*}= \bigcap\limits_{l\in \mathcal{L}} I_{\sigma}^{l*}.$$
Recalling   $\mathcal{L}=\{l\in \Z^{\bar{n}}:~1\leq |l|\leq 2\}$,  we arrive at
 $$ \mbox{meas}(I_{\sigma}\setminus I_{\sigma}^{*})=o(\sigma).$$

Moreover,  in view of the definition (\ref{dio*}), the assumption $(\omega_0,  \Omega_0)\in \tilde O_{\alpha, \tau} $ shows
 $$|\la l,  \Omega_0\ra |\ge \alpha \quad\text{for}~l\in \mathcal{L}.$$
 Combining  $\lambda\in I_{\sigma}$ with   $\|\hat\omega\|_1+\|\hat\Omega\|_1=\sigma\le \sigma_0$, for sufficiently small $\sigma$, we have
\begin{equation} \label{ZT0}
|\la l, (1+\lambda)^{-1}\Omega_{*}\circ \varpi(\lambda)\ra |= |\la l, \Omega_0 -(1+\lambda)^{-1}(\lambda\cdot \beta
-\tilde{\Omega}(\lambda))\ra |\ge \alpha/2 ~~\text{for}~l\in \mathcal{L}.
\end{equation}
Summarizing the above estimates (\ref{ZT*}) and (\ref{ZT0}), it follows that  for $\lambda\in I_{\sigma}^{*}$,
$$\bigl(\, \omega_0,  \ (1+\lambda)^{-1}\Omega_{*}\circ \varpi(\lambda) \, \bigr) \in \tilde O_{\alpha/2, 2\tau+1}.$$
If $\sigma_0\le \frac12,$ then $$\bigl(\, \omega_*\circ \varpi(\lambda), \   \Omega_{*}\circ \varpi(\lambda) \, \bigr) \in \tilde O_{\alpha/4, 2\tau+1}.$$
Note that $ \omega_*( \varpi)=(1+\lambda)\omega_0$.
Thus we prove this proposition.
\hfill $\Box$


\end{proof}

\section{ Application of Theorems \label{c*}}

In this section, by virtue of the previous discussion on the stability of Diophantine frequencies, our Theorems \ref{TT2} and \ref{TT3} can be applied
to various situations and obtain interesting results, some of which have been displayed in the literature; while some are rather novel.
This wide application accounts for the advantage of our theorems.

\vskip 0.3 true cm

$\bullet$  {\it The classical KAM theorem.}
\vskip 0.2 true cm

We first point that  the Kolmogorov
    non-degeneracy condition and  R\"ussmann's non-degeneracy condition  are stable under small perturbation.
    Thus, by standard measure estimate, for most of parameter $\xi$,   $\omega_{*}(\xi)$ belongs to the Diophantine set $O_{\alpha,\tau}$. Then Theorem   \ref{TT2} immediately shows, $H$ possesses an invariant torus with the frequencies $\omega_{*}(\xi)$, as is stated     in \cite{P1,E1,R1,R2,X1}.

\vskip 0.3 true cm
$\bullet$  {\it KAM tori with prescribed frequency.}
\vskip 0.2 true cm


We indicate that the result in \cite{Xu4}   follows obviously from Theorem \ref{TT2} and Lemma \ref{l1*}. However, due to the method of introducing external parameter,   \cite{Xu4} only presents the  existence of invariant tori with one single  prescribed frequency vector, and fails to
 obtain the  smoothness of invariant tori with respect to the parameter.
However,   Theorem \ref{TT2} can tell not only the existence of invariant tori, but also the $C^m$-smoothness   in  the parameter.
 In fact, the parameterized Diophantine frequencies in $\omega_*(\Pi)$ are   $C^m$-smooth w.r.t. $\xi$, and so are the corresponding invariant tori.

In particular, by the  theory of topological degree, our theorems  can  apply to some hamiltonian that only  continuously depends on the parameter. See the following instance.

\vskip 0.2 true cm
Consider the hamiltonian (\ref{LH1}) with
$$\omega(\xi)=(\xi_1^{2l_1+1}, \ldots, \xi_n^{2l_n+1}), \ \ \Pi=\{\xi: \  |\xi_i|\le 1, i=1, \ldots, n\},$$
where $l_i\ge 0$ are integers.
Let $0<\alpha<1. $ If $\epsilon$ is small, the  theory of topological degree implies
$$  \omega_*(\Pi)\supset O=\{\omega=(\omega_1,\cdots \omega_n)\in \R^n: \  |\omega_i|\le (1-\alpha)^{2l_i+1},~ i=1, \ldots n\}.$$
 Note that $O$ is also a domain.  Thus,  for the parameterized hamiltonian $H(\xi; \theta,I)$,    all the invariant tori with frequencies in $O\cap O_{\alpha,\tau}$   persist. Moreover, these invariant tori depend  on the parameter
 $C^m$-smoothly  in Whitney's sense \cite{W}.


\vskip 0.3 true cm
$\bullet$  {\it  KAM theorem with  Bruno  non-degeneracy condition.}
\vskip 0.2 true cm



Consider the hamiltonian  (\ref{LH1}) and $\omega(\xi)$ satisfies  the Bruno  non-degeneracy condition  (\ref{BR}).
 Proposition \ref{l6} illustrates,
  for any $\omega_0=\omega(\xi_0)\in O_\alpha$,  there exists an one-parameter continuous  family of
 invariant tori with the frequencies $(1+\lambda(\eta))\omega_0,$  where the parameter $\eta$  is close to  zero and $\lambda=O(|\eta|+\sigma)$ with $\sigma=\frac{\epsilon}{2r\alpha}.$
Especially, when  the hamiltonian  depends  on the parameter analytically,   the obtained  family can  be proved  analytically dependent on $\eta$ near zero.

\vskip 0.3 true cm
$\bullet$  {\it  KAM theorem for  hamiltonian system with two degrees of freedom.}
\vskip 0.2 true cm


Let $H(\epsilon;\theta, I)=
\langle\omega_0, I\rangle + \epsilon P(\epsilon;\theta, I),$ where    $P$ is  real analytic in $(\theta, I)$
on $ D(s,r)\subset \C^2\times \C^2,$ and $C^m$-smooth in  a small parameter  $\epsilon$ on $I_{\epsilon_0}=[0, \epsilon_0]$.
 Suppose $\omega_0=(\omega_{01},\omega_{02})\in O_{\alpha,\tau}.$ Applying  Theorem \ref{TT2} and Proposition \ref{l7*}, we have the following results:

 There exists a sufficiently small constant $\epsilon_0>0,$
 such that
   if   $$\|\epsilon P\|_{\alpha, \Pi\times D(s,\,r)}=\epsilon \le \alpha r s^{\tau'} \gamma =\epsilon_0,$$ where $\tau'=n+(m+1)(2\tau+2)+m,$
 there always  exists an non-empty set $I_{\epsilon_0}^* \subset I_{\epsilon_0} $ such that for $\epsilon\in I_{\epsilon_0}^*$,   $H(\epsilon;\theta, I)$ has
  invariant tori with  frequencies $\omega_*(\epsilon)=\omega_0+\hat\omega_0(\epsilon)\in O_{\frac{\alpha}{2}, 2\tau+2}$ satisfying  $|\hat\omega_0(\epsilon)|\le  2\epsilon/r.$
  Moreover,
  for  $m=0$,   $I^*_{\epsilon_0}$ has  continuous cardinality; for  $m\ge 1$,   $I^*_{\epsilon_0}$ has  positive measure.

\vskip 0.2 true cm
  The above result implies that  the invariant tori with Diophantine frequencies  for an integrable hamiltonian with two degrees of freedom    never isolate, which was  pointed and proved by Elliasson in \cite{E2}.

Note that here we do not require analytic condition of the hamiltonian in the parameter; therefore, we cannot obtain an accurate measure estimate for $I_{\epsilon_0}$.
 In  \cite{Xu2}, the authors   considered the same problem for analytic hamiltonian  in both the phase variables $(\theta,I)$ and the small parameter $\epsilon.$  Without imposing
any non-degeneracy condition in advance, the authors  obtained a similar result   with
$\mbox{meas}(I_{\epsilon_0}\setminus I^*_{\epsilon_0})=o(\epsilon_0)$ as $\epsilon_0\to 0 .$



\vskip 0.3 true cm
$\bullet$  {\it  Elliptic lower dimensional  KAM-tori.}
\vskip 0.2 true cm

1. Case of one normal dimension:
\vskip 0.2 true cm
Consider  the hamiltonian  (\ref{WH1}) with $\bar n=1$ and $\Omega(\xi)\equiv \Omega_0.$
Suppose   $(\omega_0, \Omega_0)=(\omega(\xi_0),\Omega_0)\in \tilde O_{\alpha,\tau}$ and
  $\omega(\xi)$ satisfies $\mbox{deg}(\omega, \Pi, \omega_0) \ne 0$.
 By Proposition \ref{l9} and Theorem \ref{TT3},
  there exist   sufficiently small constants
  $\gamma>0$ and $\sigma_0>0$ such that  if
  $$\|P\|_{\Pi;D(s,r)}=\epsilon \le \alpha r^2 s^{\tau'}\gamma \, \mbox{ with} \,
  \tau'=n+(m+1)(2\tau+1)+m,$$
  and $\sigma=\epsilon/2r^2\le \sigma_0$,
  there exists   $I^{*}_\sigma\subset I_{\sigma}$  with $\mbox{meas}(I_{\sigma }\setminus I^{*}_\sigma) =o(\sigma)$ as $\sigma\to 0$, such that
 for all $\lambda \in I^{*}_\sigma$ there exist $\xi_*\in \Pi$ and $\tilde\lambda=\hat\Omega(\xi_*)/\Omega_0$ with $|\tilde\lambda|\le \sigma/|\Omega_0|$,  such that the hamiltonian $H(\xi_* , \cdot)$ has
an invariant torus with tangential frequency $(1+\tilde\lambda)(1+\lambda)\omega_0$ and normal frequency $(1+\tilde\lambda)\Omega_0.$

\begin{rem}
Proposition \ref{l3*} and Theorem \ref{TT3}  can also be applied to   $H(\xi; \theta, I, z, \bar z)$   with $\bar n=1$ and $\Omega=\Omega(\xi).$
Let  $(\omega_0, \Omega_0)=(\omega(\xi_0),\Omega (\xi_0))\in \tilde O_{\alpha,\tau}$ and
 suppose $\mathrm{deg}\, (\omega / \Omega,~ \Pi,~ \omega_0 / \Omega_0) \ne 0.$ Then we can arrive at  an analogous    result.
\end{rem}


\vskip 0.2 true cm
2. Case of multiple normal dimensions:
\vskip 0.2 true cm

Consider the hamiltonian
$$H(\omega; I, \theta, z, \bar z)=\langle\omega, I\rangle +  \langle\Omega(\omega), z\, \bar z\rangle+P(\omega;\theta, I, z,\bar z)$$
 as in Theorem \ref{TT3} with $m\ge 1$, where the parameter $\omega\in  O\subset \R^n$.
The normal frequency vector is $$\Omega(\omega)=\beta+\omega \cdot M, \ \ \omega\in O, $$ where $\beta=(\beta_{1}, \ldots, \beta_{\bar n})$ and $M$ is an $ n\times \bar n$ constant matrix.

Suppose $\la l,\beta\ra \neq 0~~\text{for}~l\in \mathcal{L}.$ Define
$$O_{*}=\{\omega\in O:~(\omega, \, \Omega(\omega))\in \tilde O_{\alpha, \tau}\}.$$
Then we can verify that $O_{*}$ occupies a large portion of measure in $O$ for sufficiently small constant  $\alpha>0$.


Set $\omega_0\in O_{*}$ and $ \Omega_0=\Omega(\omega_0).$
  Then the combination of Proposition \ref{l8*} and Theorem \ref{TT3} yields, there exist    sufficiently small constants  $\gamma $ and $\sigma_0 $
   such that
  if
  $$\|P\|_{\bar O; D(s,r)}=\epsilon \le \frac{\alpha}{4} r^2 s^{\tau'}\gamma \, \mbox{ with}\,
 \tau'=n+(m+1)(2\tau+1)+m,$$ and $\sigma=\frac{\epsilon}{2r^2\alpha}\le \sigma_0,$
   there exists an non-empty Cantor subset $I^*_\sigma \subset   I_{\sigma}$
 and for  $\lambda \in I^*_\sigma$ there exists $\varpi\in O$ such that the hamiltonian $H(\varpi, \cdot)$ has
an invariant torus with frequencies $\bigl(\omega_{*}(\varpi),~\Omega_{*}(\varpi)\bigr)=\bigl((1+\lambda)\omega_0,~\Omega_{*}(\varpi)\bigr).$ Moreover,  we have
  $\mbox{meas}(I_{\sigma}\setminus I^*_\sigma)=o(\sigma)$ as $\sigma \to 0 .$

 \vskip 0.2 true cm
 In the case of $ M=0$,   the above result implies  that  obtained by Bourgain in \cite{Bou2}. We indicate that our assumption is a little stronger than  in \cite{Bou2}, where only  the first Melnikov's condition is required. Nevertheless,  under the second  Melnikov  condition,  we can obtain  the normal form for the persisting invariant tori, which provides the linear stability of these invariant tori and reveals more dynamical information.

 Note that by  some asymptotic property of the normal frequencies, Proposition \ref{l8*} can be extended to some infinite dimensional hamiltonian as showed in \cite{Ber}.



\section{Proof of Theorems }\label{d}
\setcounter{equation}{0}

In this section, we mainly prove Theorem \ref{TT2} and omit the proof of  Theorem \ref{TT3} since the idea is the same only with some  modified  KAM estimates.   Our proof is based on a KAM iteration. The key is to choose a suitable constant $\alpha$ in the small divisor conditions.  Usually the constant $\alpha$  decreases as the KAM step proceeds; here it will be   increasing.
Moreover, we shall present an explicit  extension of small divisors  rather than using  Whitney's extension theorem\cite{W}. In particular, even though small divisor condition does not hold, our extension  still  works, which plays
an important role in separating  the KAM iteration and non-degeneracy condition. We should note that the idea of the small divisor extension is also  used by Elliasson in \cite{E2}. In fact,  the spirit in our proof is more or less similar to that in \cite{E2}. More precisely, the existence of KAM tori depend on existence of Diophantine frequencies in the final KAM step ( the limit of KAM iteration).

\vskip 0.2 true cm {\bf KAM-step.}
 We summarize our KAM step
 in the following iteration lemma.

 \vskip 0.2 true cm
\begin{lemma}{\rm (Iteration Lemma)}  \label{T3} Consider the following
hamiltonian $$H(\xi;\theta,I ) = N(\xi;I) +
P(\xi;\theta, I),$$ where $N(\xi
;I)=\langle\omega(\xi), I \rangle.$
 Let  $\alpha\le \alpha_*\le 2\alpha $, $\tau>n-1, m\ge 0, \tau'=n+m+\tau (m+1).$ Assume $\omega\in C^m(\Pi_0)$ and
 $$\|P\|_{\alpha_*, \Pi_0\times D(s,r)}\le \epsilon=\alpha r\rho^{\tau'} E.$$
Set $
s_+=s-5\rho, \;\eta=\sqrt{E}, \;  r_+=\eta r.$
Then the following conclusions hold:\\
(i)~ For any $\xi\in
\Pi_0$ there exists a symplectic mapping
$$\Phi(\xi;\cdot ,\, \cdot): D(s_+, \, r_+)\to D(s,r),$$
which is real analytic in $(I,\theta)$ on $D(s_+, r_+)$ and $C^m$-smooth in $\xi$ on $\Pi_0 $
  such that
  $$\|W(\Phi-id)\|_{\alpha_*, \Pi\times D(s_+, r_+)},~~\|W({\cal D}\Phi-Id)W^{-1}\|_{\alpha_*,\Pi_0\times D(s_+, r_+)
}\le c
E,\; $$  where $\cal D$ is the differentiation operator with
respect to  $(\theta, I) $ and  $ W=\mbox{diag}(\rho^{-1} I_n,
r^{-1}I_n)$ with $I_n$ being the $n$-th  unit matrix.\\
(ii) There exists a real analytic hamiltonian  $$H_+(\xi;I,\theta)=N_+(\xi; I) + P_+(\xi;\theta, I) $$
   defined on $D(s_+, r_+)$, that is  $C^m$-smooth in $\xi\in \Pi_0,$ where
 $$N_+(\xi;I)=\langle\omega_+(\xi), I \rangle,\quad \omega_+=\omega+\hat\omega$$ with the estimate
 $$\|\hat\omega\|_{\alpha_*, C^m(\Pi_0)}\le \epsilon/r.$$
 $P_+$ denotes the new perturbation  satisfying
$$\|P_+\|_{\alpha_*, \Pi_0\times D(s_+,r_+)}\le \epsilon_+=\alpha_+ r_+
\rho_+^{\tau'}E_+.$$
Here,
$$\rho_+=\frac12\rho, ~~  E_+=c(m,n,\tau)\cdot E^{\frac32},~~  \alpha \le \alpha_+\le 2\alpha.$$
\noindent
(iii)
Set $e^{-K \rho}=E$ and   $$O_\alpha^K=\bigl\{\omega\in\R^n :\ |\langle\omega, k\rangle|\ge \frac{\alpha}{|k|^{\tau}}, \  0<|k|\le K.\bigr\}  $$
Suppose $2 K^{\tau+1} \epsilon  \le  (\alpha_+-\alpha) r$ and define
\begin{equation}\label{FG}
\Pi=\bigl\{\xi\in\Pi_0 :\ \ \omega(\xi)\in O_\alpha^K \bigr\}, \quad  \Pi_+=\bigl\{\xi\in\Pi_0:\ \ \omega_+(\xi)\in O_{\alpha_+}^{K_+} \bigr\},
\end{equation}
where $K_+>K$   satisfies  $e^{-K_+ \rho_+}=E_+.$
Then we have  $\Pi_+\subset \Pi$.  \\
 Moreover,
$$H\circ \Phi( \xi;\theta, I)=H_+(\xi;\theta, I)= N_+(\xi; I) + P_+(\xi;\theta, I),\quad \text{for}~~\xi\in\Pi .$$

\end{lemma}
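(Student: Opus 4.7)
The plan is to execute a single KAM step in the usual Fourier-truncation style, with the crucial twist that the generating function $F$ is defined globally in $\xi\in\Pi_0$ by an explicit smooth extension of the reciprocal small divisors, rather than by Whitney extension off the Diophantine set. First I would choose the cut-off frequency $K$ by $e^{-K\rho}=E$, split $P=R+(P-R)$ where $R=\sum_{|k|\le K}P_k(\xi;I)\,e^{\mathrm{i}\la k,\theta\ra}$, and observe by the analyticity in $\theta$ that $\|P-R\|_{\alpha_*,\Pi_0\times D(s-\rho,r)}\le \epsilon\,e^{-K\rho}=\epsilon E$. Separate $R_0(\xi;I)=R_{0,0}(\xi)+\la R_{0,1}(\xi),I\ra+R_0^{(2)}(\xi;I)$, and define $\hat\omega(\xi)=R_{0,1}(\xi)=\partial_I P_0(\xi;0)$, so that by Cauchy's inequality in $I$ one gets immediately $\|\hat\omega\|_{\alpha_*,C^m(\Pi_0)}\le \epsilon/r$, which is the bound claimed in (ii).

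The heart is the homological equation $\{N,F\}=R_{0,0}+\la\hat\omega,I\ra-R$, whose formal Fourier solution is $F_k(\xi;I)=R_k(\xi;I)/\big(\mathrm{i}\la k,\omega(\xi)\ra\big)$ for $0<|k|\le K$. Instead of restricting to $\omega(\xi)\in O_{\alpha}^{K}$, I would fix once for all a smooth cut-off $\psi:\R\to[0,1]$ with $\psi(x)=1$ for $|x|\ge 1$ and $\psi(x)=0$ for $|x|\le 1/2$, and define the globally valid generator by
\begin{equation*}
F_k(\xi;I)=\frac{R_k(\xi;I)\,\psi\!\big(|k|^{\tau}\alpha^{-1}\la k,\omega(\xi)\ra\big)}{\mathrm{i}\la k,\omega(\xi)\ra},\qquad 0<|k|\le K.
\end{equation*}
This is $C^m$ in $\xi$ on all of $\Pi_0$ with the uniform bound $|F_k|\le |R_k|\,|k|^{\tau}/\alpha$; when $\omega(\xi)\in O_\alpha^K$ the cut-off is identically $1$ and the homological equation is satisfied exactly; otherwise $F$ is merely a formal object. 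Using $|k|\le K$, $\sum_k e^{-|k|\rho}|k|^{n+m+(m+1)\tau}\lesssim \rho^{-\tau'}$, and the $\alpha_*$-weighted Leibniz rule to propagate each $\partial_\xi^\beta$ through the quotient, I obtain $\|F\|_{\alpha_*,\Pi_0\times D(s-\rho,r)}\lesssim \epsilon\,\rho^{-\tau'}/\alpha= r E\rho$, and correspondingly $\|W X_F\|_{\alpha_*,\Pi_0\times D(s-2\rho,r/2)}\lesssim E$.

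With $F$ in hand, $\Phi=\phi_F^1$ is the time-$1$ flow of the Hamiltonian vector field $X_F$; by Gronwall / successive iteration, $\Phi$ is well defined on $D(s_+,r_+)$ and maps it into $D(s,r)$ provided $E$ is small, and $\|W(\Phi-\mathrm{id})\|,\|W(\mathcal D\Phi-\mathrm{Id})W^{-1}\|\lesssim E$ as required in (i); the $C^m$-smoothness in $\xi$ is inherited from $F$. For any $\xi\in\Pi_0$ I then declare $H_+(\xi;\theta,I):=N(\xi;I)+\la\hat\omega(\xi),I\ra+P_+(\xi;\theta,I)$ with $P_+$ \emph{defined} by
\begin{equation*}
P_+=\int_0^1\{(1-t)(R_{0,0}+\la\hat\omega,I\ra-R)+tR,F\}\circ\phi_F^t\,dt+(P-R)\circ\Phi+R_0^{(2)},
\end{equation*}
i.e.\ the standard Taylor remainder plus tail plus quadratic-in-$I$ average; this formula makes sense for every $\xi\in\Pi_0$ and is $C^m$ in $\xi$. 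When $\omega(\xi)\in O_\alpha^K$ (so the homological equation is genuinely satisfied) a direct computation gives $H\circ\Phi=H_+$; otherwise the identity $H_+=H\circ\Phi$ need not hold but $H_+$ is still well defined, which is precisely the point of the explicit extension.

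It remains to verify the new smallness $\epsilon_+=\alpha_+ r_+\rho_+^{\tau'}E_+$ with $E_+=cE^{3/2}$ and the set inclusion $\Pi_+\subset\Pi$. Each of the three pieces of $P_+$ is estimated by standard generalized Cauchy on $D(s_+,r_+)$: the commutator piece contributes $\lesssim \epsilon\cdot E/\rho\sim \alpha r\rho^{\tau'}E^2$, the tail contributes $\lesssim \epsilon E\sim \alpha r\rho^{\tau'}E^2$, and the quadratic-in-$I$ term contributes $\lesssim \epsilon(r_+/r)^2= \epsilon\eta^2=\epsilon E$, yielding $\|P_+\|_{\alpha_*,\Pi_0\times D(s_+,r_+)}\lesssim \alpha r\rho^{\tau'}E^2\le \alpha_+ r_+\rho_+^{\tau'}E_+$ since $r_+=\eta r=E^{1/2}r$. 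Finally, for (iii): if $\xi\in\Pi_+$ then for every $0<|k|\le K\le K_+$,
\begin{equation*}
|\la k,\omega(\xi)\ra|\ge |\la k,\omega_+(\xi)\ra|-|k|\,\|\hat\omega\|\ge \frac{\alpha_+}{|k|^\tau}-K\cdot\frac{\epsilon}{r}\ge \frac{\alpha}{|k|^\tau},
\end{equation*}
where the last inequality uses the hypothesis $2K^{\tau+1}\epsilon\le(\alpha_+-\alpha)r$, so $\omega(\xi)\in O_\alpha^K$ and hence $\xi\in\Pi$. On $\Pi$ the cut-off $\psi$ is identically $1$, so the conjugation identity $H\circ\Phi=H_+$ holds pointwise, completing (iii). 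The main obstacle I anticipate is purely bookkeeping: propagating the $\alpha_*$-weighted $C^m$ norm through the quotient defining $F_k$, because each derivative in $\xi$ of $1/\la k,\omega\ra$ produces additional factors of $|k|^\tau/\alpha$, and only the combination $\alpha^{|\beta|}$ built into the weighted norm absorbs them cleanly; this is the source of the exponent $\tau'=n+(m+1)\tau+m$ and must be tracked carefully.
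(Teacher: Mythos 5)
Your proposal follows essentially the same route as the paper's proof: Fourier truncation with $e^{-K\rho}=E$, an explicit smooth cut-off extension of the reciprocal small divisors $1/\bigl(\mathrm{i}\langle k,\omega(\xi)\rangle\bigr)$ over all of $\Pi_0$ (the paper's $\varphi_h$ with $h=\alpha/|k|^{\tau}$ is exactly your $\psi$), the time-$1$ flow of the extended generator with the same weighted Cauchy estimates, and the same perturbation argument $|\langle k,\omega\rangle|\ge \alpha_+/|k|^{\tau}-K\epsilon/r\ge \alpha/|k|^{\tau}$ for $\Pi_+\subset\Pi$; the only cosmetic difference is that the paper first replaces $P$ by its affine-in-$I$ part $R=P(\xi;\theta,0)+\langle P_I(\xi;\theta,0),I\rangle$ before truncating in $\theta$, whereas you keep all orders of $I$ in the homological equation and park $R_0^{(2)}$ in $P_+$, which is equivalent and yields the same $\eta^{2}\epsilon=\epsilon E$ contribution. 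Two harmless arithmetic slips: $\epsilon\rho^{-\tau'}/\alpha=rE$ (not $rE\rho$), and the bracket term is $\lesssim \epsilon E=c\,\epsilon^{2}/(\alpha r\rho^{\tau'})$ as in the paper, so your intermediate claim ``$\epsilon E/\rho\sim\alpha r\rho^{\tau'}E^{2}$'' is off by a factor $\rho$, but the correct weighted estimate restores the stated bound and the conclusion stands.
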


\vskip 0.2 true cm \noindent
 {\bf Proof of Iteration Lemma.}
  Our KAM step is  standard and  we  divide it into  several parts.
Here and below we use $c$ to indicate the constants which are independent of KAM steps.

 \vskip 0.2 true cm
 {\it A. Truncation.} Set $R=P(\xi;\theta, 0) + \langle
P_I(\xi;\theta, 0), I \rangle .$ It follows easily that
$\|R\|_{\Pi_0\times D(s,\,r)}\le 2\|P\|_{\Pi_0\times D(s,\,r)}\le
2\epsilon.$  Let
 $$R=\sum_{k\in \Z^n}R_k(\xi;I)e^{\rm{i}\langle k,\,\theta \rangle }$$ and
 $$R^K=\sum_{|k|\le K}R_k(\xi;I)e^{\rm{i}\langle k,\,\theta \rangle }.$$
Then
 $$\|R-R^K\|_{\Pi_0\times D(s-\rho,\, r)}\le 2\epsilon e^{-K\rho}.$$

\vskip 0.2 true cm
 {\it B. Construction of symplectic mapping.} The
symplectic mapping is generated by a hamiltonian flow mapping at
$1$-time, that is, $\Phi=X_F^t|_{t=1},$ where $F$ is the generation
function. It follows that
 $$H \circ\Phi = N_+ + \{N,F\} + R^K-[R] + P_+,$$ where $[R]$
denotes the average of $R$ on $\T^n$ and $\{\cdot, \cdot \}$   the Poisson
bracket. The new normal form is   $N_+=N+[R]=\langle I, \omega_+(\xi)\rangle,$ $\omega_+=\omega+\hat\omega$ with $\hat\omega=\partial_I[R].$
  $$P_+= \int_0^1\{(1-t)\{N, F\}+R^K, F\}\circ
X^t_F\,dt + (P-R^K)\circ\Phi.$$

 We choose  $F$ such that
\begin{equation}
\{N,F\} + R^K-[R]=0.\label{l1}\end{equation}
Let $\{F_k\}$ and $\{R_k\}$ be relevant Fourier coefficients
with respect to $\theta$. Thus, $F_k=0$ with $k=0$ or $|k|>K;$ and  for $\langle\omega(\xi), k\rangle \ne 0,$
$$ F_k=\frac{1}{\rm{i}\langle\omega(\xi), k\rangle }R_k,\quad 0<|k|\le K.   $$
Thus, it follows
 $$P_+= \int_0^1\{(1-t)[R]+tR^K, F\}\circ
X^t_F\,dt + (P-R^K)\circ\Phi.$$

 \vskip 0.3 true cm
 \noindent
 {\it C. Extension of small divisors.}  Now we define a $C^{\infty}(\R)$-smooth
function $\varphi(t)$ as
\begin{equation}\nonumber
 \varphi(t)= \left \{ \begin{aligned}  & ~~ 0, \ \ \  |t| \le \frac12,   \\
&   ~~1, \ \ \  |t| \ge  1. \end{aligned}\right.
 \end{equation}
For $h>0$, let $\varphi_h(t)= \varphi(t/h)$. Then
$\varphi_h(t)\in C^{\infty}(\R)$ with


\begin{equation}
\label{e} |\frac{{d}^{ \ell} \ }{d t^{\ell}}\varphi_h(t)|\le c_{\ell}/
h^{l}, \quad \forall t\in  \R, \ \ \forall \ell\ge
1,\end{equation}  where $c_{\ell}$ is a constant depending on $\ell$.

Let
$$h=\frac{\alpha}{|k|^{\tau}},\   \ t_k(\xi)=\langle\omega(\xi), k\rangle , \ \ g_k(\xi)=\frac{\varphi_h(t_k(\xi))}{\rm{i}\langle\omega(\xi), k\rangle }.$$
Recall the definition of $\Pi$ in (\ref{FG}). Then  $g_k(\xi)=\frac{1}{\rm{i}\langle\omega(\xi), k\rangle } $ for $\xi\in \Pi$.
Note that even if $\Pi=\emptyset,$ the extension of  $g_k(\xi)$ is still well defined on $\Pi_0.$
Furthermore,  $g_k(\xi)\in C^m(\Pi_0)$ with the estimate
$$\bigl|\frac{\partial^\beta g_k}{\partial\xi^{\beta}}(\xi) \bigr|\le ch^{-|\beta|-1}|k|^{|\beta|},  \quad   \xi \in \Pi_0, \ \forall~ |\beta|\le m.$$

 Now we extend $F_k$   from $\Pi$ to the whole set $\Pi_0$ by setting
$$   \widetilde{F}_k(\xi;I)=g_k(\xi)R_k(\xi;I)=\frac{\varphi_h(t_k(\xi))}{\rm{i}\langle\omega(\xi), k\rangle }R_k(\xi;I),\,\, 0<|k|\le K.$$

Let $ \widetilde{F}(\xi;I,\theta)=\sum_{0<|k|\le K} \widetilde{F}_k(\xi;I) e^{{\rm i} \langle k,\theta\rangle} $ and
we have
$$\| \widetilde{F}\|_{\alpha_*,\Pi_0\times D(
r,s-\rho)}\le\frac{c\epsilon}{\alpha \rho^{\tau'}}, \ \ \tau'=n+\tau (m+1)+m.$$

\vskip 0.2 true cm
 {\it D. Estimates for symplectic mapping.}
 It follows from Cauchy estimate that
$$\|WX_{ \widetilde{F} }\|_{\alpha_*, \Pi_0\times D(
r,s-2\rho)}\le \frac{c\epsilon}{\alpha r \rho^{\tau'}}=c E,$$
where $W=\mbox{diag}(\rho^{-1}I_n, r^{-1}I_n).$

Thus, if $0< \eta\le \frac18$ and $c E \le \frac18, $   for all
$\xi\in \Pi$ we have
$$\Phi(\xi;\cdot,\cdot)=X_{ \widetilde{F} }^1: D(r\eta, s-3\rho) \to D(2r\eta, s-2\rho).$$
 Cauchy  estimate again yields
$$\|W(\Phi-id)\|_{\alpha_*, \Pi_0\times D(s-5\rho,\eta r)},~\|W({\cal D}\Phi-Id)W^{-1}\|_{\alpha_*, \Pi_0\times D(s-5\rho,\eta r)
}\le c E. $$

  \vskip 0.2 true cm
{\it E. New error terms.}
 Following the same approach as in the classical KAM theorem, we arrive at
$$\|P_+\|_{\alpha_*, \Pi_0\times D(s_+,r_+)}< c  \frac{\epsilon^2}{\alpha r \rho^{\tau'}}
+c (\eta^2+e^{-K\rho})\epsilon,  $$ where  $c$  is a constant depending only on $n$ and $\tau.$  The choice of the
parameters  $\eta$ and $K$ implies,
$$\|P_+\|_{\alpha_*, \Pi_0\times D(s_+,r_+)}\le c\epsilon E\le \alpha_+  r_+ \rho_+^{\tau'}  E_+= \epsilon_+.$$
 where   $ \alpha_+, \ \rho_+, \ r_+,  E_+$ are   given as  in the lemma.

Recall that $\hat\omega=\partial_I [R]$ and   we have  $\|\hat\omega\|_{\alpha_*, C^m(\Pi_0)}\le  \epsilon/r.$
Suppose $2 K^{\tau+1} \epsilon  \le  (\alpha_+-\alpha) r$, and then $\Pi_+\subset\Pi $ holds.

 \vskip 0.3 true cm
{\bf Iteration.} Now we choose some suitable sequences of parameters so that the
above step can iterate infinitely.

At the initial step, let  $\rho_0=s/20,$ $r_0=r,$ $E_0=2\cdot 20^{\tau'}\gamma>0$, $\alpha_0=(1-\frac12)\alpha $ and     $\epsilon_0=E_0\alpha_0 r_0 \rho_0^{\tau'}.$ Let  $\eta_0={E_0}^{\frac12}$ and
   $e^{-K_0\rho_0}=E_0.$

For $j\geq 0$, we define
\begin{align*}
\begin{gathered}
\rho_{j+1}=\frac12\rho_j,~r_{j+1}=\eta_j
r_j,~E_{j+1}= c E_{j}^{\frac32},~\alpha_{j+1}=(1-\frac{1}{2^{j+3}})\alpha.\\
\epsilon_j=E_j\alpha_j r_j \rho_j^{\tau'},~~~\eta_j=E_j^{\frac12},~~~e^{-K_j\rho_j}=E_j.
\end{gathered}
\end{align*}
 Note that
$\alpha_j\le \alpha\le 2\alpha_j.$
It is easy to verify $ cE_{j}\le ( cE_0)^{(\frac32)^j}$.

 Now we check the assumption  $2 K_j^{\tau+1} \epsilon_j  \le  (\alpha_{j+1}-\alpha_j) r_j.$
   This is equivalent to prove $F_j=2^{j+3}K_j^{\tau+1}\epsilon_j /r_j\le \alpha.$
 Notice that $$\frac{F_{j+1}}{F_j}=2cE_j^{\frac12}\cdot \biggl(\frac{K_{j+1}}{K_j}\biggr)^{\tau+1}.$$
It follows from  $K_j= -\ln E_j/\rho_j $  that
 $$ K_{j+1}/K_j=  2\ln  E_{j+1}/\ln E_j= (2\ln   \tilde c +3\ln E_{j} )/\ln  E_j \le 3.$$
 Then we have   $F_{j+1}\le  c E_j^{\frac12}F_j.$
 Note that $$ F_0=4\epsilon_0 K_0^{\tau+1}/r_0= 2 \cdot 20^{1-\tau' } s^{\tau'-1}E_0(\ln|E_0|)^{\tau+1} \alpha ,$$
which implies  for all fixed $s,r>0$ and  sufficiently small $E_0$,   $F_k\le \alpha$ holds for any $ k\ge  0.$
Hence we immediately derive $\Pi_{j+1}\subset \Pi_j$ from  the assumption   $2 K_j^{\tau+1} \epsilon_j  \le  (\alpha_{j+1}-\alpha_j) r_j $.

 Let $\Pi_0=\Pi$ and $D_j=D(s_j,r_j).$  Applying
    Iteration Lemma \ref{T3}, we have a sequence of monotonously decreasing closed sets $\{\Pi_j\}$,
  and a sequence of symplectic mappings $\{\Phi_j\}$
 such that for each $\xi\in \Pi,$
 $\Phi_j(\xi;\cdot,\cdot): D_{j+1} \to D_j $ with  the   estimates
$$\|W_j(\Phi_j-id)\|_{\alpha, \Pi\times D_{j+1}},~~\|W_j({\cal D}\Phi_j-Id)W_j^{-1}\|_{\alpha, \Pi\times D_{j+1}} \le
c E_j.$$

Meanwhile, we have a sequence of hamiltonian $H_j=N_j+P_j,$
where $N_j(\xi;I)=\langle \omega_j(\xi),I\rangle$ and $P_j$ satisfies
$$\|P_j\|_{\alpha, \Pi\times D_j}\le \epsilon_j=\alpha_j r_j \rho_j^{\tau'}E_j.$$
For any $j\geq 0,$ $\omega_j\in C^m(\Pi)$ and $\omega_{j+1}=\omega_j+\hat\omega_j$ with $\|\hat\omega_j\|_{\alpha, C^m(\Pi)}\le \epsilon_j/r_j.$

Furthermore, for $\xi\in\Pi_j$ we have
 $$H_{j+1}=H_j\circ\Phi_j
 =N_{j+1}+P_{j+1},$$
Denote by $\Phi^j=\Phi_0\circ\Phi_1\circ\cdots \Phi_{j-1}$ with $\Phi^0=\mbox{id}.$ Then the monotonousness of $\{\Pi_j\}$ shows,  $H_{j}=H\circ\Phi^j $ holds for $\xi\in \Pi_j.$

 \vskip 0.3 true cm
 \noindent
 {\bf Convergence of iteration.} Now we prove the convergence
 of the KAM iteration. In the same way as in \cite{P1,Xu4},
it follows that if $c^\frac12 E_0 \le \frac12,$ then
$$\|W_0 {\cal D} \Phi^j  W_j^{-1} \|_{\alpha, \Pi\times D_j}\le
\prod_{i=1}^{j}(1+c E_j)<2.$$
Therefore,
$$\|W_0(\Phi^{j}-\Phi^{j-1})\|_{\alpha, \Pi_j\times D_j},~\|W_0 {\cal D}(\Phi^{j}-\Phi^{j-1})\|_{\alpha, \Pi\times D_j}\le
c E_j.$$

Let $D_*=D(0,\frac12 s)$ and
$\Phi_*=\lim_{j\to\infty}\Phi^j.$  Since $\Phi^j$ is affine in $I$,
$ \Phi^j$ converges to $\Phi_{*}$ on $D(s/2, r/2)$ with the estimate
 \begin{equation}
 \|W_0(\Phi_*-id)\|_{\alpha, \Pi\times D(s/2,r/2)}\le
 cE_0.\nonumber
\end{equation}

Denote by $P_j\to P_*$ and $\omega_j \to \omega_{*}$.
Then $P_*$  is real analytic with respect to  $(I,\theta)$ on $D(r/2, s/2)$ and $C^m$-smooth in $\xi$ on $\Pi.$ Moreover,
$\frac{\partial^{\ell} P_*}{\partial I^{\ell}}  |_{I=0}=0, \  |\ell|\le 1.$
Note that
$\omega_j=\omega+\sum_{i=0}^{j-1}\hat \omega_i.$ Then    we have
$$\|\omega_{*}-\omega_j\|_{\alpha, C^m(\Pi)}\le  \sum_{i=j}^{\infty}\frac{\epsilon_j}{r_j}=\sum_{i=j}^{\infty}\alpha_i\rho_i^{\tau'}E_i
\le \frac{2\epsilon_j}{r_j}.$$
 Especially,
  $$\|\omega_*-\omega\|_{\alpha, C^m(\Pi)}\le   \frac{2\epsilon}{r}.$$

Let
$\Pi_*=\{\xi\in\Pi: \ \omega_*(\xi)\in O_\alpha \}$. In the sequel we  show $\Pi_{*}\subset \Pi_j$ for all $j\geq 0.$ In fact,
recall $F_j=2^{j+3}\epsilon_jK_j^{\tau+1}/r_j\le \alpha$.
Then, for $\xi\in\Pi_*$ and $0<|k|\le K_j,$
\begin{align*}
|\langle \omega_j, k\rangle|&\ge |\langle \omega_*, k\rangle|-|\langle \omega_*-\omega_j, k\rangle|
  \ge \frac{\alpha}{|k|^{\tau}}-\frac{2\epsilon_j}{r_j}K_j \\
  &\ge \frac{\alpha}{|k|^{\tau}}-\frac{\alpha}{2^{j+2}}\cdot \frac{1}{K_j^{\tau}}\ge \frac{\alpha_j}{|k|^{\tau}}.
\end{align*}
Therefore,  $\Pi_*\subset \bigcap_{j\ge 0}\Pi_j.$
Finally, we arrive at   $H\circ\Phi_*=H_*=N_*+P_* $ for $\xi\in\Pi_*$.




\end{document}